\renewcommand{\emph}{\textit}		
\newcommand{\com}{\ifthenelse{\boolean{comm}}}
\newcommand{\sol}{\ifthenelse{\boolean{sol}}}
\newcommand{\note}{\ifthenelse{\boolean{notes}}}
\newtheorem{Def}{Definition}
\newtheorem{Prop}[Def]{Proposition}
\newtheorem{Th}[Def]{Theorem}
\newtheorem{Lem}[Def]{Lemma}
\newtheorem{Cor}[Def]{Corollary}
\newtheorem{PD}[Def]{Proposition/Definition}
\theoremstyle{definition}
\newtheorem{Rem}[Def]{Remark}
\newcommand{\mR}{\ensuremath{\mathbb{R}}}								
\newcommand{\mC}{\ensuremath{\mathbb{C}}}					
\newcommand{\mZ}{\ensuremath{\mathbb{Z}}}
\newcommand{\mB}{\ensuremath{\mathbb{B}}}
\newcommand{\mc}{\mathcal}								
\newcommand{\linspan}{\text{span}}
\DeclareMathOperator{\Image}{Im}		
\renewcommand{\Im}{\Image}							
\newcommand{\Scp}{\langle\;\cdot\;|\;\cdot\;\rangle}		
\newcommand{\ee}{{\mbox{$\varepsilon$}}}
\renewcommand{\phi}{\varphi}
\newcommand{\sph}{\mathbb{S}}
\newcommand{\fn}{\mathcal{C}}	   				  
\newcommand{\mb}{\begin{pmatrix}}					
\newcommand{\me}{\end{pmatrix}}						
\newcommand{\hsp}{\mc{H}}									
\newcommand{\bo}{\mathbb{B}(\mc{H})}			
\DeclareMathOperator{\ind}{ind}						
\newcommand{\Dom}{\mathcal{D}}						
\newcommand{\Cliff}{\text{Cliff}}					
\newcommand{\Cliffc}{\mC l}
\newcommand{\stoe}{\ensuremath{\mathcal{T}^\infty}}		
\newcommand{\sco}{\ensuremath{\mathcal{K}}}						
\newcommand{\ssu}{\ensuremath{\mathcal{S}}}						
\newcommand{\sCo}{\ensuremath{\mathcal{C}}}						
\newcommand{\potimes}{\otimes_{\pi}}									
\newcommand{\mcA}{\mc{A}}															
\newcommand{\mcB}{\mc{B}}
\newcommand{\mcC}{\mc{C}}
\newcommand{\mcS}{\mc{S}}
\DeclareMathOperator{\Ker}{Ker}						
\DeclareMathOperator{\Coker}{Coker}
\DeclareMathOperator{\Hom}{Hom}						
\DeclareMathOperator{\id}{id}							
\begin{document}

\title{\Large{{THE THOM ISOMORPHISM IN BIVARIANT K-THEORY}}}
\author{\small{MARTIN GRENSING}}
\date{\small{\today}}
\maketitle
\thispagestyle{empty}
\begin{abstract} \small{We give a simple proof of the smooth Thom isomorphism for complex bundles for the bivariant $K$-theories on locally convex algebras considered by Cuntz. We also prove the Thom isomorphism in Kasparov's $KK$-theory in a form stated without proof in the conspectus (\cite{MR1388299}).}
\end{abstract}
 \section{Introduction}
The classical proof of the Thom isomorphism  in $K$-theory for a complex bundle $E\to X$ over a compact space $X$ is based on the Thom elements $\lambda_E$ (see \cite{MR0224083}). These Thom elements are certain classes in the reduced $K$-theory of Thom space $E^+$ of the bundle. Using an orthogonal of the bundle, one may then deduce the Thom isomorphism from Bott periodicity.

We here define the analogous smooth bivariant Thom classes in Cuntz's $kk$ (\cite{MR1456322}). These smooth bivariant Thom classes have  certain multiplicativity properties which, combined  with elementary formal properties of $kk$, yield a very simple proof of the Thom isomorphism for $kk$. As $kk$ is universal for diffotopy invariant, half exact and stable functors, this yields the Thom isomorphism for any such functor. In particular, using the bivariant Chern-Connes character from \cite{MR1456322}, we obtain the Thom isomorphism in bi- and monovariant periodic cyclic cohomology. The analogous result for split exact functors (\cite{UC}) is more difficult to obtain, and requires entirely different techniques.

We then proceed to give a proof of Bott periodicity and  a version of the Thom isomorphism that was stated without proof in \cite{MR1388299}. This is essentially based on the techniques of \cite{KaspOp}, but we use instead the connection formalism of Connes and Skandalis \cite{MR775126}, and deduce Bott periodicity basically  from the theory of the harmonic oscillator. We base our proof on the functoriality of $KK$ for principal bundles.

We note that the Thom elements $\lambda_E$, and hence the smooth bivariant Thom classes in $kk$, may also be obtained as restrictions of the fibred versions  of the Bott element used below in the proof of the Thom isomorphism in $KK$ by replacing the operator and applying a Morita equivalence. 

I would like to thank G. Skandalis and J. Cuntz for helpful discussions and remarks.
\section{The Thom Isomorphism in $kk$}\label{kkthom}
\subsection{$kk$-theory}
We recall the definition of $kk$-theory as given by Cuntz in \cite{MR2240217}, to which we refer for details. 
\begin{Def}\label{semisplit} An exact sequence 
\[\xymatrix{0\ar[r]&{\mcC}\ar[r]&{\mcB}\ar[r]&\mcA\ar[r]&0}\]
of locally convex algebras will be called semisplit if it is split as a sequence of topological vector spaces. It will be called a split exact sequence, if it is split in the category of locally convex algebras. 
\end{Def}

The essential ingredients for the theory $kk$ are as follows:
\begin{itemize}
\item The smooth compacts $\sco$, which are the algebra of matrices with rapidly decreasing entries,\index{$\sco$}
\item the smooth Toeplitz algebra,\index{$\stoe$} which is as a vector space $\sco\oplus\fn^{\infty}(\sph)$, and a locally convex algebra with the multiplication it inherits from the $C^*$-Toeplitz algebra,
\item the resulting Toeplitz extension 
\[\xymatrix{0\ar[r]&\sco\ar[r]&\stoe\ar[r]&\fn^\infty(\sph)\ar[r]&0}\]
\item the cone extension 
\[\xymatrix{0\ar[r]&\ssu\ar[r]&\sCo\ar[r]&\mC\ar[r]&0}\]
\item the universal semisplit  extension (cf. Definition \ref{semisplit}) of a locally convex algebra $A$\index{$TA$}\index{$JA$}
\[\xymatrix{0\ar[r]&JA\ar[r]&TA\ar[r]&A\ar[r]&0}\]
 where $TA$ is the locally convex tensor algebra of the vector space $A$, and the quotient map the map induced by universality from $\id_A$, $JA$ the kernel of this map. It also has a continuous linear split $A\to TA$ given by the inclusion. Using the universality of $TA$, one shows that this is in fact the initial object in the category of semisplit extensions of $A$. More generally:
\end{itemize}
\begin{Prop}
For any semisplit extension of $A'$ and morphism $\phi:A\to A'$ (this is called a semisplit morphism extension) there are maps $\sigma$, $\psi$ such that 
\[\xymatrix{0\ar[rr]&&				JA\ar[rr]\ar[d]_{\sigma}&&	TA\ar[rr]\ar[d]^{\psi}&& A	\ar[rr]\ar[d]^{\phi}&&						0\\
   0\ar[rr]&&				B\ar[rr]&&							 	C\ar[rr]&&		A'\ar[rr]\ar@/_.3cm/[ll]^s&&			    			0	}\]
	commutes.
	\end{Prop}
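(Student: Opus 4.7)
The plan is to use the universal property of the tensor algebra $TA$: a continuous linear map from $A$ into any locally convex algebra $B$ extends uniquely to an algebra homomorphism $TA \to B$. The split $s$ of the bottom row gives us exactly such a linear map into $C$.

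\textbf{Step 1 (Construction of $\psi$).} The composite $s \circ \phi : A \to C$ is continuous and linear (although not multiplicative in general, since $s$ need not be a homomorphism). By the universal property of the tensor algebra, there exists a unique continuous algebra homomorphism $\psi : TA \to C$ whose restriction to $A \subset TA$ coincides with $s \circ \phi$.

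\textbf{Step 2 (Commutativity of the right square).} I would check that the two algebra homomorphisms $TA \to A'$ given by $\phi \circ (\text{quotient})$ and $(\text{quotient}) \circ \psi$ agree. On the generating subspace $A \subset TA$ the first map sends $a \mapsto \phi(a)$, while the second sends $a \mapsto s(\phi(a)) \bmod B = \phi(a)$ because $s$ is a section of the quotient $C \to A'$. Since both maps are algebra homomorphisms from $TA$ agreeing on the generating subspace $A$, uniqueness in the universal property forces them to be equal.

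\textbf{Step 3 (Construction of $\sigma$).} Commutativity of the right square implies that $\psi$ sends $JA = \ker(TA \to A)$ into $B = \ker(C \to A')$. Define $\sigma : JA \to B$ as the restriction of $\psi$. This makes the left square commute tautologically (both horizontal maps are the inclusions), and $\sigma$ is automatically continuous and multiplicative as a restriction of $\psi$.

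There is no real obstacle here; the proposition is essentially a formal consequence of the universal property of $TA$ together with the existence of the linear split $s$, and the only thing to verify carefully is that the two candidate maps $TA \to A'$ in Step 2 agree, which is where the uniqueness clause of the universal property does the work.
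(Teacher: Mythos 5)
Your proof is correct and follows exactly the paper's own construction: the paper likewise defines $\psi$ as the homomorphism induced on $TA$ from the continuous linear map $s\circ\phi$ by universality, verifies the right square via the section property of $s$, and obtains $\sigma$ as the restriction of $\psi$ to $JA$, which lands in $B$ by commutativity. Nothing to add.
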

The map $\psi$ is the map induced from $s\circ\phi$ on $TA$ by universality, and it restricts by commutativity to $\sigma:JA\to B$.
	
\begin{Def} The map $\sigma$ of the last definition is called the classifying map of the morphism extension.
\end{Def}
The (diffotopy class of a) classifying map of such a morphism extension does not depend on the split (join two different splits by the obvious linear path). More generally, we have classifying maps $J^nA\to B$ for an $n$-step extension.  We define $T^n A$ and $J^nA$ by iteration of the functors $J$ and $T$ (this is independent of the order up to diffotopy).

\begin{Def} The groups $kk^{alg}_m(A,B)$\index{$kk^{alg}_m(A,B)$} for locally convex algebras $A$ and $B$ are now defined as the direct limit
$$\lim_{\to} \left(\Hom(J^{m+n} A,\sco\otimes_\pi \ssu^n B)/\sim\right),$$
and \index{$kk^{\mc{L}^p}(A,B)$}$kk^{\mc{L}^p}_m(A,B):=kk^{alg}_m(A,B\otimes_\pi \mc{L}^p)$. The class of a homomorphism $\phi$ will be denoted $\langle \phi\rangle$.
\end{Def}
Here $\sim$ denotes diffotopy, and the connecting maps $\Lambda_n$\index{$\Lambda_n$} are given by sending  $\phi:J^{n+m} A\to \sco\otimes_\pi\ssu^n B$ to the classifying map of
\[\xymatrix{&&& J^{n+m} A	\ar[d]^{\phi}\\
   0\ar[r]&			\sco\otimes_\pi \ssu^{n+1} B\ar[r]&					 	\sco\otimes_\pi \sCo\ssu^n B \ar[r]&		\sco\otimes_\pi \ssu^n B\ar[r]&			    			0 .}\]
To simplify notation, we write $kk$ for $kk^{alg}$; further we will suppress the tensor product by $\sco$ whenever this does not essentially change the arguments.  

We note that $kk^{\mc{L}^p}$ does not depend on $p$ (as long as $p$ is large enough). The further properties of the theory may be nicely developed using triangulated categories and  postponing the stabilisation by $\sco$ or $\mc{L}^p$ to the end. We refer to \cite{CuntzMeyer} for further details on this.
\subsection{The outer product in $kk$}

Denote by $s$ the canonical map $J^m(A\otimes_\pi B)\to J^m(A)\otimes_\pi B$. We will define a multiplicative transformation \index{$\tau_D$}
$$\tau_D:kk_*^{alg}(A,B)\to kk_*^{alg}(A\otimes D,B\otimes D)$$ 
as follows: For $\phi:J^m A\to \ssu^n B$, set
$$\tau_D\langle \phi\rangle:=\langle J^m(A\otimes_\pi D)\overset{s}{\rightarrow}J^m A\otimes_\pi D\overset{\phi\otimes_\pi\id_D}{\longrightarrow}\ssu^n B\otimes_\pi D\rangle.$$
 To show that $\tau_D$ is well defined, it suffices to show $\tau_D\langle \phi\rangle=\tau_D\langle\Lambda \phi\rangle$, which will follow from
 \begin{align*}&\langle\Lambda(J^m(A\otimes_\pi D)\stackrel{s}{\rightarrow}J^m A\otimes_\pi D \overset{\phi\otimes id_D}{\longrightarrow}
 \ssu^n B\otimes_\pi D\rangle\\
=&\langle J^{m+1}(A\otimes_\pi D)\overset{s}{\rightarrow}
 J^{m+1} A\otimes_\pi D\overset{\Lambda\phi\otimes \id_D}{\longrightarrow}\ssu^{n+1}B\otimes_\pi D\rangle.
\end{align*}
 The equality of these maps is obtained by the usual arguments, using the defining diagrams for both maps.
\ifthenelse{\boolean{notes}}{\marginpar{proof}}{}
The following easy proposition determines the action of $\tau_D$ on elements of $kk^{alg}$ given by extensions:
\begin{Prop}
If $\langle\phi\rangle$ is the class in $kk_1(A,B)$ of the classifying map of a semisplit extension
$$\xymatrix{ 0\ar[r]&B\ar[r]&C\ar[r]&A\ar[r]&0}$$
then $\tau_D\langle\phi\rangle$  is the class of the classifying map of
$$\xymatrix{ 0\ar[r]&B\otimes_\pi D\ar[r]&C\otimes_\pi D\ar[r]&A\otimes_\pi D\ar[r]&0}.$$
An analogous formula holds for extensions of higher length.
\end{Prop}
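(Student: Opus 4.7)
The plan is to identify both elements of $kk_1(A\otimes_\pi D, B\otimes_\pi D)$ as restrictions to $J(A\otimes_\pi D)$ of a single homomorphism of morphism extensions, invoking the principle (already used above) that the classifying map of a semisplit extension is independent, up to diffotopy, of the choice of continuous linear split.

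First I would fix a continuous linear split $s:A\to C$ of the given extension. By universality of $TA$ this lifts to a homomorphism $\psi_s:TA\to C$ whose restriction to $JA$ is a representative of $\phi$. Simultaneously, $s\otimes\id_D$ is a continuous linear split of the tensored extension. Then I would consider the composite
\begin{displaymath}
\xymatrix@C=0.5cm{
T(A\otimes_\pi D) \ar[r] & TA\otimes_\pi D \ar[rr]^{\psi_s\otimes\id_D} & & C\otimes_\pi D,
}
\end{displaymath}
where the first map is the universal homomorphism associated to the linear inclusion $A\otimes_\pi D\hookrightarrow TA\otimes_\pi D$; by definition its restriction to $J(A\otimes_\pi D)$ is the canonical map $s$ appearing in the definition of $\tau_D$. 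A short unwinding of universal properties shows that this composite covers $\id_{A\otimes_\pi D}$ and is in fact the universal homomorphism induced by the split $s\otimes\id_D$ of the bottom extension: indeed its precomposition with the canonical inclusion $A\otimes_\pi D\hookrightarrow T(A\otimes_\pi D)$ factors through $a\otimes d\mapsto a\otimes d$ in $TA\otimes_\pi D$ and then through $\psi_s\otimes\id_D$, giving $s\otimes\id_D$. Restricting to $J(A\otimes_\pi D)$ and reading the result in two ways yields on the one hand $(\phi\otimes\id_D)\circ s$, which represents $\tau_D\langle\phi\rangle$ by definition, and on the other hand a classifying map of the tensored extension.

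For higher-length extensions the same argument applies after factoring into one-step semisplit extensions and iterating, using that tensoring by $D$ preserves this factorisation and that the corresponding classifying maps compose accordingly. The main obstacle is simply careful bookkeeping: one must verify that the canonical map appearing in the definition of $\tau_D$ is itself the restriction of the universal lift induced by the linear inclusion $A\otimes_\pi D\hookrightarrow TA\otimes_\pi D$. Once this identification is made explicit, both sides of the proposition are visibly restrictions of the same homomorphism to the kernel, and no separate diffotopy argument is required for the length-one case.
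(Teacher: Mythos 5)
Your argument is correct and is essentially the paper's own proof: the paper writes down the same three-row commutative diagram $J(A\otimes_\pi D)\to JA\otimes_\pi D\to B\otimes_\pi D$ factoring through $TA\otimes_\pi D$ and invokes uniqueness (up to diffotopy) of classifying maps, which is exactly your identification of the composite as the universal lift of the split $s\otimes\id_D$. The only cosmetic difference is that you observe the two restrictions agree on the nose for this particular choice of split, whereas the paper phrases the conclusion as a diffotopy.
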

 \begin{proof}
 This follows by uniqueness of classifying maps and commutativity of
 $$\xymatrix{ 
	 0\ar[rr]&&	J(A\otimes D)\ar[rr]\ar[d]^s&&	T(A\otimes D)\ar[rr]\ar[d]&&	A\otimes D\ar[rr]\ar[d]_{\id}&&			0\\
	 0\ar[rr]&&				JA\otimes D\ar[rr]\ar[d]^{\phi\otimes \id_D}&&	TA\otimes D\ar[rr]\ar[d]&&	A\otimes D\ar[rr]\ar[d]_{\id}&&						0\\
   0\ar[rr]&&				B\otimes D\ar[rr]&&							 	C\otimes D\ar[rr]&&		A\otimes D\ar[rr]&&			    			0	}$$
 where the outer edge is defining the classifying map of the extension tensored by $D$, and is therefore diffotopic to $(\phi\otimes \id_D)\circ s$.
\end{proof}

\begin{Def}
We define the cup product by \index{$\cup$}
$$kk_i(A,B)\times kk_j( A^\prime,B^\prime) \to kk_{i+j}(A\otimes A^\prime,  B\otimes B^\prime)$$
$$(\alpha,\beta) \mapsto \alpha\cup\beta:=\tau_{A^\prime}(\alpha)\cap\tau_B(\beta)$$
\end{Def}

As an easy consequence of the above proposition, we may describe the cup product of the classes of two extensions
$$\xymatrix{ 0\ar[r]&B\ar[r]&C\ar[r]&A\ar[r]&0}, \xymatrix{ 0\ar[r]&B^\prime\ar[r]&C^\prime\ar[r]&A^\prime\ar[r]&0}$$
 by the class of the length two extension
 $$\xymatrix{ 0\ar[r]&B\otimes_\pi B^\prime \ar[r]&C\otimes_\pi B^\prime\ar[r]&A\otimes_\pi C^\prime \ar[r]& A\otimes_\pi A^\prime\ar[r]&0}$$
 \ifthenelse{\boolean{notes}}{\marginpar{In Karoubi on Bott periodicity: The $\cup$ is graded commutative!}}{}
 We give two easy properties of the cup product:
 \begin{Prop}
The cup product is graded commutative and the following formula describes the relation between the cup and cap product
 $$(\alpha\cup\beta)\cap(\gamma\cup\delta)=(\alpha\cap\gamma)\cup(\beta\cap \delta),$$
 where $\alpha\in kk(A,B)$, $\beta\in kk(A^\prime,B^\prime)$, $\gamma\in kk(B,C)$ and $\delta\in kk(B^\prime, C^\prime)$.
 \end{Prop}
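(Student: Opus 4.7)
My plan splits the proposition into the interchange formula and the graded commutativity claim.

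For the interchange formula, I would first verify that $\tau_D$ is multiplicative with respect to the cap product, which is immediate from the defining formula $\tau_D\langle\phi\rangle=\langle(\phi\otimes\id_D)\circ s\rangle$ together with the naturality of $s$. Unraveling the definition of the cup product then yields
\begin{align*}
(\alpha\cup\beta)\cap(\gamma\cup\delta) &= \tau_{A'}(\alpha)\cap\tau_B(\beta)\cap\tau_{B'}(\gamma)\cap\tau_C(\delta),\\
(\alpha\cap\gamma)\cup(\beta\cap\delta) &= \tau_{A'}(\alpha)\cap\tau_{A'}(\gamma)\cap\tau_C(\beta)\cap\tau_C(\delta).
\end{align*}
The identity thus reduces to the middle-four interchange
\[\tau_B(\beta)\cap\tau_{B'}(\gamma)=\tau_{A'}(\gamma)\cap\tau_C(\beta)\]
in $kk(B\otimes A',C\otimes B')$. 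Both sides are represented by an outer tensor product of morphisms acting on disjoint tensor factors, so they commute; the equality can be read off directly from the classifying maps using the naturality of $s$.

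For graded commutativity, the degree-zero case is transparent: $\alpha\cup\beta$ and $\beta\cup\alpha$ are represented by outer tensor product homomorphisms, which are identified via the algebra-isomorphism flips $A\otimes A'\cong A'\otimes A$ and $B\otimes B'\cong B'\otimes B$. In higher degree the representing maps land in algebras containing suspension factors $\ssu^i$ and $\ssu^j$, and comparing $\alpha\cup\beta$ with $\beta\cup\alpha$ introduces an additional flip of $\ssu^i\otimes\ssu^j$. Viewing $\ssu^{i+j}\cong\mathcal{C}_0^\infty(\mR^{i+j})$, this flip is the pullback by a permutation of coordinates of sign $(-1)^{ij}$, and such a permutation then acts on $kk$ by multiplication by its sign.

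The main obstacle is this last assertion: that transposition of two suspension coordinates represents $-1$ in $kk(\ssu^2,\ssu^2)$. This is the normalization at the heart of Bott periodicity; one can either identify $\ssu^2$ with a Bott algebra and note that the reflection $x\mapsto -x$ on $\ssu$ represents $-1$, or construct an explicit diffotopy between the coordinate flip on $\mR^2$ and minus the identity using a $180^\circ$ rotation together with tracked reflections. Once this sign is pinned down, everything else reduces to routine categorical manipulation.
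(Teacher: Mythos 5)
Your treatment of the interchange formula is essentially the paper's own argument: unwind the definition of $\cup$, swap the two middle factors, and invoke multiplicativity of $\tau$ under the cap product. Note, though, that the ``middle-four interchange'' $\tau_B(\beta)\cap\tau_{B'}(\gamma)=\tau_{A'}(\gamma)\cap\tau_C(\beta)$ that you isolate is not a separate, easier fact --- it \emph{is} the graded commutativity statement for the pair $(\beta,\gamma)$ read through the canonical flips $B\otimes A'\cong A'\otimes B$, etc., so in the graded setting it carries the Koszul sign $(-1)^{|\beta||\gamma|}$. The paper performs this swap silently too, so you are at the same level of rigor, but your phrasing ``both sides are outer tensor products on disjoint factors, so they commute'' makes it sound more innocent than it is: the representing homomorphisms live on $J^{m}$'s and $\ssu$'s whose order matters, which is exactly where the sign hides.

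For graded commutativity itself you take a genuinely different route from the paper. The paper writes down a three-row commutative diagram interleaving the functors $J^m$ and $J^n$ applied to $A\otimes A'$ and reads off the two cup products as classifying maps of the outer edge, delegating the sign to ``the usual arguments'' about reordering iterated $J$'s. You instead push everything to the suspension side: compare $\alpha\cup\beta$ and $\beta\cup\alpha$ via the flip isomorphisms and reduce the sign to the action of a coordinate permutation on $\ssu^{i+j}\cong\fn_0^\infty(\mR^{i+j})$, whose block transposition has sign $(-1)^{ij}$. Since $J$ and $\ssu$ are interchangeable in $kk$ via the connecting maps $\Lambda_n$, the two arguments are two faces of the same bookkeeping; yours is more geometric and makes the normalization explicit, the paper's stays inside the classifying-map formalism. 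You correctly identify the one nontrivial input --- that the reflection $\rho:x\mapsto -x$ on $\ssu$ represents $-1$ in $kk(\ssu,\ssu)$. Your rotation argument only gets you from the coordinate flip to $1\otimes\rho$ (the flip equals a $90^{\circ}$ rotation composed with a single reflection, and $SO(2)$ is connected); the step from $\rho_*$ to $-1$ still has to be supplied, e.g.\ by Cuntz's Bott periodicity for $kk_m$ together with the fact that $\rho$ reverses the winding number of the generator of $K_1(\fn_0(\mR))$, or by the classical ``loop concatenated with its reverse is null-diffotopic'' argument. With that normalization pinned down, your proof is complete and, if anything, more self-contained than the paper's.
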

\begin{proof}
Represent $\alpha$ by $\phi:J^m A\to \ssu^{\underline{m}} B$ and $\beta$ by $\psi:J^nA^\prime\to \ssu^{\underline{n}} B^\prime$. Then the commutativity of $\cup$ up to sign follows by the usual arguments from commutativity of 
\[\scalebox{0.85}{\xymatrix{ 
	 0\ar[rr]&&	J^{m+n}(A\otimes A^\prime)\ar[rr]\ar[d]&&				J^m(A\otimes J^n A^\prime)\ar[rr]^{J^m(\id_A\otimes \psi)}\ar[d]&&	J^m(A\otimes \ssu^{\underline{n}}B^\prime)\ar[rr]\ar[d]&&						0\\
	 0\ar[rr]&&		J^n(J^mA\otimes A^\prime)\ar[rr]\ar[d]^{J^n(\phi\otimes A^\prime)}&&	J^mA\otimes J^n A^\prime\ar[rr]^{J^m(\id_A)\otimes \psi}\ar[d]^{\phi\otimes J^n A^\prime}&&							  	J^m(A)\otimes \ssu^{\underline{n}}B\ar[rr]\ar[d]^{\phi\otimes \ssu^{\underline{n}}B}&&			0\\
   0\ar[rr]&&								J^n(\ssu^{\underline{m}}B\otimes A^\prime)\ar[rr]&&							 \ssu^{\underline{m}}B\otimes J^n A^\prime\ar[rr]_{\ssu^{\underline{m}}B\otimes \psi}&&															  \ssu^{\underline{m}}B\otimes \ssu^{\underline{n}} B\ar[rr]&&	0	}}\]
To prove the formula given above, we calculate
\begin{align*}
(\alpha\cup\beta)\cap(\gamma\cup\delta)=&\tau_{A^\prime}(\alpha)\cap\tau_B(\beta)\cap \tau_{B^\prime}(\gamma)\cap\tau_C(\delta)\\
																			=&\tau_{A^\prime}(\alpha)\cap\tau_{A^\prime}(\gamma)\cap\tau_C(\beta)\cap\tau_C(\delta)\\
																			\stackrel{*}{=}&\tau_{A^\prime}(\alpha\cap\gamma)\cap\tau_C(\beta\cap\delta)\\
																			=&(\alpha\cap\gamma)\cup(\beta\cap\delta),
\end{align*}
where we use multiplicativity of $\tau$ at $*$.
\end{proof}

In particular, the formula above shows that the cup product $\alpha\cup\beta$ of invertible elements is invertible  with inverse $\pm(\alpha^{-1}\cup\beta^{-1})$ (invertibility with respect to the cap product).

\subsection{The Thom isomorphism in $kk_m$}
This proof uses $kk_m$, the version of $kk$ theory developed by Cuntz in \cite{MR1456322}, which is a bivariant $K$-theory on the category of $m$-algebras.  It nicely reduces the computations of products almost entirely to $K$-theory, where one may use the classical results. We denote throughout this section by $\Sigma$\index{$\Sigma$} the reduced suspension, i.e., the sphere with base point the north pole, and by $H$ \index{$H$}the tautological line bundle on the two-sphere.\note{Atiyah definition before 2.2.1}{} We first formulate the ingredients for this proof:
\begin{enumerate}
\item The theory $kk_m$ \index{$kk_m$}as developed in \cite{MR1456322}, and the fact that it has Bott periodicity (\cite{MR1456322}, Satz 5.4)
\item Phillips K-theory for Frechet-algebras (for this paragraph, à Frechet algebra is complex, locally multiplicatively convex algebra which is a Frechet space) from
 \cite{MR1082838}
\item\label{comparisonK} The comparison between $kk_m(\mC,\cdot)$ and Phillips $K$-theory from \cite{MR1456322}, Theorem 7.4
\item The multiplicative elements $\lambda_E\in\tilde K^0(X^E,\infty)=K_0(\fn_0(E)\tilde{})$ for a bundle $E\rightarrow X$ ($X^E$ denotes the Thom space of $V$)\index{$X^E$}
\end{enumerate}
Let us first comment on the relation between the first three items: The  isomorphism in \ref{comparisonK}   is constructed in  \cite{MR1456322}, Chapter 7, and it is easily seen to be compatible with outer products.  One thus sees that the class $\langle H\rangle-1$, i.e., the generator of $\tilde K^0(\Sigma^2)(\approx K_0(S^2))$ in topological $K$-theory, is mapped to a  generator in $kk_m(\mC,\ssu^2)$, by using that the $K$-theory of the smooth subalgebra $\ssu^2$ coincides with the reduced topological $K$-theory of $\Sigma^2$. Further, as everything commutes with outer products, the generator of $\tilde K(\Sigma^{2n})$  yields (up to sign) the canonical generator $y_{2n}$ of the group $kk(\mC,\ssu^{2n})$  for every $n$. We denote $x_{2n}$ the inverse of $y_{2n}$.

Our goal is to introduce bivariant Thom elements. Throughout this section, all bundles $E\twoheadrightarrow X$ are taken to be smooth complex bundles over a  manifold. We denote by $\mc{S}(E)$\index{$\mcS(E)$} the smooth functions on the total space of the bundle that are Schwartz functions along the fibre (compare also \cite{ThesePaulo}). We define\index{$k_m(A)$} $k_m(A):=kk_m(\mC,A)$ for every locally convex algebra.

We recall from \cite{MR0488029}, chapter 4, and Atiyah \cite{MR0224083}, p. 100 that the Thom-element for a complex bundle over a compact space is a certain class  $\lambda_E\in K(\fn_0(E))$. Because algebraic $K$-theory is invariant under passage to smooth subalgebras, and $k_m$ coincides with Phillips $K$-theory for Fréchet algebras, we may view $\lambda_E$ as elements in $k_m(\mcS(E))$ by choosing a smooth representative of $\lambda_E$.  The properties of the elements $\lambda_E$ in topological $K$-theory then yield the following proposition.
\begin{Prop} For every smooth complex bundle $\pi:E\twoheadrightarrow X$ over a compact manifold, there is an element $\lambda_E\in k_m(\mcS(E))$ \index{$\lambda_E$}such that the following properties hold
\begin{enumerate}
\item If $E $ is the trivial $n$-dimensional complex bundle, then $\lambda_E=[1_{\mcS(X)}]\cup y_{2n}\in k_m(\mcS(X)\potimes \mcS(\mC^n))$.
\item $\lambda_E\cup\lambda_{E'}=\lambda_{E\times E'}$, where $E'\twoheadrightarrow Y$ is a bundle over a compact manifold $Y$ and $E\times E'$ denotes the product bundle over $X\times Y$.
\end{enumerate}
\end{Prop}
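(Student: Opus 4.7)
The plan is to construct $\lambda_E$ by transporting Atiyah's topological Thom elements through the comparison isomorphism of item (iii), and then to derive both properties from the corresponding facts in topological $K$-theory via the compatibility of that isomorphism with outer products.

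First I would recall Atiyah's construction: for every complex bundle $E\to X$ over a compact space he defines a class $\lambda_E^{\mathrm{top}}\in K(\fn_0(E))$ satisfying the normalization $\lambda_{X\times\mC^n}^{\mathrm{top}}=[1_{\fn(X)}]\cup b^n$ (with $b$ the Bott generator of $\tilde K^0(\sph^2)$) and the multiplicativity $\lambda_E^{\mathrm{top}}\cup\lambda_{E'}^{\mathrm{top}}=\lambda_{E\times E'}^{\mathrm{top}}$. The next step, already sketched just before the proposition, is to observe that $\mcS(E)\hookrightarrow\fn_0(E)$ is a dense subalgebra closed under the holomorphic functional calculus, so the inclusion induces an isomorphism on Phillips $K$-theory; combined with item (iii) this produces a canonical lift $\lambda_E\in k_m(\mcS(E))$ of $\lambda_E^{\mathrm{top}}$. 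Property (i) then drops out: by the paragraph preceding the proposition, the comparison isomorphism identifies $b^n\in\tilde K(\Sigma^{2n})$ with $\pm y_{2n}\in kk(\mC,\ssu^{2n})$ and is compatible with exterior products, so evaluating at the trivial bundle translates Atiyah's normalization directly into the formula $\lambda_E=[1_{\mcS(X)}]\cup y_{2n}$ in $k_m(\mcS(X)\potimes\mcS(\mC^n))$.

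Property (ii) is obtained by the same transport argument: Atiyah's identity $\lambda_E^{\mathrm{top}}\cup\lambda_{E'}^{\mathrm{top}}=\lambda_{E\times E'}^{\mathrm{top}}$ lives in $K(\fn_0(E\times E'))=K(\fn_0(E)\otimes\fn_0(E'))$, and the comparison isomorphism, being compatible with the cup product and with the identifications $\mcS(E\times E')\cong\mcS(E)\potimes\mcS(E')$, carries it over to the desired identity in $k_m$. The main obstacle will be the bookkeeping for these two compatibilities: one must verify, first, that $\mcS(E)\subset\fn_0(E)$ really is a local $C^*$-subalgebra so that Phillips $K$-theory is insensitive to the smoothing step, and second, that the cup product in $kk_m$ built from the transformation $\tau_D$ of the previous subsection intertwines with the topological outer product under the isomorphism of item (iii). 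Neither verification is deep, but both are routine technicalities that must be written out explicitly; once they are in place, properties (i) and (ii) follow formally from Atiyah's theorem.
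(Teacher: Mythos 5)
Your proposal is correct and follows essentially the same path the paper takes: the paper also obtains $\lambda_E$ by starting from Atiyah's class in $K(\fn_0(E))$, passing to the smooth subalgebra $\mcS(E)$ (invariance of $K$-theory under passage to smooth subalgebras), identifying $k_m$ with Phillips $K$-theory via the comparison isomorphism, and then reading off properties (i) and (ii) from the topological case using compatibility with outer products. The two ``routine technicalities'' you flag are precisely the points the paper also relies on, and the paper treats them at the same level of brevity.
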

Using these elements, we  define bivariant Thom elements:
\begin{Def} For every two smooth complex bundles $\pi:E\twoheadrightarrow X$,  $\pi':E'\twoheadrightarrow X$   over a compact manifold $X$ we denote by $\phi_{E,E'}:\mcS(E\times E')\to \mcS(E\oplus E')$ the map induced by the inclusion $E\oplus E'\hookrightarrow E\times E'$,\index{$\phi_{E,E'}$} and set \index{$t_{E,E'}$}
$$t_{E,E'}:={\phi_{E,E'}}_*(\tau_{\mcS(E)}(\lambda_{E'}))\in kk_m(\mcS(E),\mcS(E\oplus E')).$$
\end{Def}
The bivariant Thom elements have the following properties:
\begin{Lem} Let $X$ be a compact manifold.
\begin{enumerate} 
\item Let $\pi':E'\twoheadrightarrow X$ be the complex $n$-dimensional trivial bundle. Then 
$$t_{E,E'}=\tau_{\mcS(E)}(y_{2n})\in kk_m(\mcS(E),\mcS(E)\potimes\mcS(\mR^n)).$$
\item Let $\pi:E\twoheadrightarrow X$, $\pi':E'\twoheadrightarrow X$ be two bundles. Then
$$t_{E,E'}\cap t_{E\oplus E',E''}=t_{E,E'\oplus E''}.$$
\end{enumerate}
\end{Lem}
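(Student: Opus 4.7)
The plan is to prove both parts by representing the classes by explicit homomorphisms (modulo the usual stabilisations) and exploiting compatibility between the maps $\phi_{E,E'}$ and the natural identifications $\mcS(F \times F') \cong \mcS(F) \potimes \mcS(F')$ for bundles over compact manifolds.

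For (i), I would first unfold $t_{E,E'} = (\phi_{E,E'})_*\tau_{\mcS(E)}(\lambda_{E'})$ and substitute the explicit formula $\lambda_{E'} = [1_{\mcS(X)}] \cup y_{2n}$ furnished for trivial bundles by the preceding proposition. Identifying $\mcS(E \times E') \cong \mcS(E) \potimes \mcS(X) \potimes \mcS(\mR^n)$ and $\mcS(E \oplus E') \cong \mcS(E) \potimes \mcS(\mR^n)$, one sees that $\phi_{E,E'}$ is of the form $m_E \otimes \id$, where $m_E \colon \mcS(E) \potimes \mcS(X) \to \mcS(E)$ is the module multiplication obtained by pulling $\mcS(X)$-functions back along $\pi$. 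A representative of $\tau_{\mcS(E)}([1_{\mcS(X)}] \cup y_{2n})$ then sends $a \in \mcS(E)$ to $a \otimes 1_{\mcS(X)} \otimes \psi$, where $\psi$ represents $y_{2n}$; since $m_E(a \otimes 1_{\mcS(X)}) = a$, post-composition by $\phi_{E,E'}$ reduces this to $a \otimes \psi$, which is precisely a representative of $\tau_{\mcS(E)}(y_{2n})$.

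For (ii), the key intermediate identity is $\lambda_{E' \oplus E''} = (\phi_{E',E''})_*(\lambda_{E'} \cup \lambda_{E''})$, obtained by pulling the multiplicativity $\lambda_{E'} \cup \lambda_{E''} = \lambda_{E' \times E''}$ of the preceding proposition back along the diagonal inclusion $E' \oplus E'' \hookrightarrow E' \times E''$. At the level of representatives, $t_{E,E'} \cap t_{E\oplus E', E''}$ becomes the composition
\[\mcS(E) \to \mcS(E) \potimes \mcS(E') \xrightarrow{\phi_{E,E'}} \mcS(E \oplus E') \to \mcS(E \oplus E') \potimes \mcS(E'') \xrightarrow{\phi_{E\oplus E',E''}} \mcS(E \oplus E' \oplus E''),\]
whereas $t_{E, E' \oplus E''}$ is the composition obtained by first inserting Thom representatives into all three tensor slots and then applying $\id_{\mcS(E)} \otimes \phi_{E', E''}$ followed by $\phi_{E, E' \oplus E''}$. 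Matching the two reduces to commutativity of the square whose four arrows are all induced by the iterated inclusion $E \oplus E' \oplus E'' \hookrightarrow E \times E' \times E''$, which is immediate at the level of total spaces since set-theoretic restriction may be performed in either order.

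The principal obstacle will be to carry out this bookkeeping inside the $kk_m$ framework rather than at the level of bare homomorphisms, i.e.\ to lift the reasoning through the $J^\bullet$ and $\ssu^\bullet$ shift functors and the $\sco$-stabilisation appearing in the definition of the $kk_m$-groups. For this one invokes naturality of $\tau_D$ in $D$ together with the compatibility of pushforward with the cap product, both of which allow the class-level equality to be deduced from the homomorphism-level computation outlined above.
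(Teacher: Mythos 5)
Your proposal is correct and matches the paper's proof in all essentials: the paper likewise treats (i) as a direct unfolding of the definitions ("straightforward"), and proves (ii) from the multiplicativity $\lambda_{E'}\cup\lambda_{E''}=\lambda_{E'\times E''}$, the compatibility $\phi_{E\oplus E',E''}\circ(\phi_{E,E'}\otimes\id)=\phi_{E,E'\oplus E''}\circ(\id\otimes\phi_{E',E''})$ of the restriction maps induced by the iterated inclusion, and the naturality of $\tau$ and of push-forward with respect to the cap product. The only presentational difference is that the paper carries out the computation formally at the level of classes (using $\psi_*(\alpha)\cap\beta=\psi_*(\alpha\cap\psi^*\beta)$-type identities) rather than via explicit representatives, and it leaves the naturality $\lambda_{E'\oplus E''}=(\phi_{E',E''})_*(\lambda_{E'\times E''})$ implicit where you state it as your key intermediate identity.
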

\begin{proof} 
The first statement is straightforward. We abbreviate $\tau_{\mcS(E)}$ by $\tau_E$  for a given bundle. Then we have 
\begin{equation}\label{phi} {\phi_{E,E'}}^*(\tau_{E\oplus E'}(\lambda_{E''}))=(\phi_{E,E'}\otimes \id_{\mcS(E'')})_*(\tau_{E\times E'}(\lambda_{E''}))
\end{equation}
and 
\begin{equation}\label{trivialembed} \theta:=\phi_{E\oplus E',E''}\circ (\phi_{E,E'}\otimes \id_{\mcS(E'')})=\phi_{E,E'\oplus E''}\circ(\id_{\mcS(E)}\otimes \phi_{E',E''}).
\end{equation}
Thus we get
\begin{align*}
t_{E,E'}\cap t_{E\oplus E',E''}=&{\phi_{E,E'}}_*(\tau_E(\lambda_{E'}))\cap {\phi_{E\oplus E',E''}}_*(\tau_{E\oplus E'}(\lambda_{E''}))\\
=&{\phi_{E\oplus E',E''}}_*(\tau_E(\lambda_{E'})\cap {\phi_{E,E'}}^*(\tau_{E\oplus E'}({\lambda_{E''}})))\\ 
=&\theta_*(\tau_E(\lambda_{E'})\cap (\tau_{E\times E'}({\lambda_{E''}})))\\
=&\theta_*(\tau_E(\lambda_{E'}\cup \lambda_{E''})))\\
=&\theta_*(\tau_E(\lambda_{E'\times E''})))\\
=&{\phi_{E,E'\oplus E''}}_*(\tau_E(\lambda_{E'\oplus E''}))\\
=&t_{E,E'\oplus E''}
\end{align*} \end{proof}
\begin{Th}[The smooth Thom isomorphism] Let $X$ be a closed manifold. There is an invertible element in $kk_m(\mcS(X),\mcS(E))$ for every complex bundle $E\twoheadrightarrow X$.
\end{Th}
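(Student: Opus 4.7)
The plan is to show that the element $t_{X,E}\in kk_m(\mcS(X),\mcS(E))$---obtained from the Definition by taking the first bundle to be the rank-zero bundle $X\to X$---is itself invertible. The underlying idea is classical: use the stable triviality of $E$ over the compact base $X$ to reduce, via the multiplicativity of the bivariant Thom elements just established, to Bott periodicity in $kk_m$.

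Since $X$ is compact, one may choose a complementary bundle $\pi':E'\twoheadrightarrow X$ together with an isomorphism $E\oplus E'\cong X\times\mC^n$. Applying Lemma~(ii) with the first bundle of rank zero gives
\[t_{X,E}\cap t_{E,E'}=t_{X,E\oplus E'},\]
and Lemma~(i), through the chosen trivialization, identifies the right-hand side with $\tau_{\mcS(X)}(y_{2n})$. This is invertible in $kk_m$ with inverse $\tau_{\mcS(X)}(x_{2n})$ by Bott periodicity (\cite{MR1456322}, Satz~5.4), so $t_{X,E}$ already has a right inverse and $t_{E,E'}$ a left inverse. To produce a right inverse for $t_{E,E'}$ I would run the same argument with the roles of $E$ and $E'$ swapped: the coordinate swap gives $E'\oplus E\cong X\times\mC^n$, so Lemma~(ii) and~(i) together yield
\[t_{E,E'}\cap t_{E\oplus E',E}=t_{E,E'\oplus E}=\tau_{\mcS(E)}(y_{2n}),\]
once more invertible. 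Hence $t_{E,E'}$ is two-sided invertible, and the equality $t_{X,E}=\tau_{\mcS(X)}(y_{2n})\cap t_{E,E'}^{-1}$ exhibits $t_{X,E}$ itself as a cap product of invertibles, with explicit inverse $t_{E,E'}\cap\tau_{\mcS(X)}(x_{2n})\in kk_m(\mcS(E),\mcS(X))$.

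The only non-formal inputs are Bott periodicity and the standard existence of a stable complement for $E$; everything else is bookkeeping. The one subtlety I anticipate is naturality of the bivariant Thom classes under bundle isomorphisms, required to identify $t_{E,E'\oplus E}$ with the trivial-bundle case of Lemma~(i); this should follow from the uniqueness properties of the classes $\lambda_E$, up to a possible sign which is in any case irrelevant for invertibility.
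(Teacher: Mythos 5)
Your argument is correct and coincides with the paper's own proof: both choose a complement $E^\perp$ with $E\oplus E^\perp$ trivial, apply the multiplicativity lemma twice to exhibit a right inverse for $t_{X,E}$ and a two-sided inverse for $t_{E,E^\perp}$, and conclude via Bott periodicity. Your extra remark on naturality under the trivializing isomorphism is a point the paper leaves implicit, but it does not change the argument.
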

\begin{proof} Let $E^\perp$ be a bundle such that $E\oplus E^\perp$ is trivial. We have by the above Lemma:
$$t_{X,E}\cap t_{E,E^{\perp}}=t_{X,E\oplus E^\perp}=\tau_{\mcS(X)}(y_{n}),$$
for some $n$, thus $t_{X,E}$ is right invertible. So is $t_{E,E^\perp}$ by
$$t_{E,E^\perp}\cap t_{E\oplus E^\perp,E}=t_{E, E^\perp\oplus E}=\tau_{\mcS(E)}(y_m).$$
Thus $t_{X,E}$ is right invertible with invertible right inverse, thus invertible.\end{proof}

\section{Bott periodicity and the Thom isomorphism in $KK$}\label{cstarbott}

\subsection{Some preliminaries concerning Clifford algebras}\label{prepcliff}
\newcommand{\ees}{{\ee^*}}
We fix throughout a real vector space $V$ of dimension $n$ with a positive definite scalar product $\Scp$, and write $\Cliff (V,\Scp)$ for the Clifford algebra of $V$, where we use the convention $v^2=+\langle v|v\rangle$. We denote by $\Cliffc(V)$ the complexification of $\Cliff(V)$, and abbreviate $\Cliffc (\mR^n):=\mC_n$\index{$\mC_n$}. See \cite{LawsonMichelsohn} for further details on Clifford algebras. For $v\in V$, we denote by $\ee(v)\in\mB(\Lambda_\mC^*V)$\index{$\ee$} the operator of exterior multiplication with $v$. We view $\Lambda_\mC^* V$ as a Hilbert space with the canonical scalar product. Recall that the Clifford algebra is functorial with respect to linear maps preserving the scalar product, and $\Cliff(V)^{(i)}$, $i=0,1$, denotes the $(-1)^{i}$-eigenspaces of the map $\gamma$ induced by $v\mapsto -v$; in particular, the exterior algebra is graded by $\gamma$.

\renewcommand{\ees}{\bar\ee}
Recall that the CAR algebra is the algebra generated by $\ee_k$, $\ees_l$ subject to the relations
$$\{\ee_k,\ees_l\}=\delta_{kl},\;\; \{\ee_k,\ee_l\}=0=\{\ees_k,\ees_l\},$$
where $\{a,b\}=ab+ba$ denotes the anticommutator. 

\begin{Prop}\label{carrelations} The $*$-algebra generated by $\ee(V)$, where $\ee$ denotes the outer multiplication on the exterior algebra, yields a representation of the CAR algebra  by setting
$$\ee_k:=\ee(e_k)\text{ and }\ees_k:=(\ee_k)^*$$
where $\{e_k\}$ is an orthonormal basis for $V$.

Further, we have the relation
$$\sum_k\ees_k\ee_k=\sum_p (n-p)\id_{\Lambda^p_\mC (V)}.$$
\end{Prop}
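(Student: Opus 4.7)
The plan is to identify $\ees_k$ explicitly with interior multiplication (contraction) by $e_k$ on the exterior algebra, and then read off all the claimed identities by direct inspection on the orthonormal basis $\{e_{i_1}\wedge\cdots\wedge e_{i_p}:i_1<\cdots<i_p\}$ of $\Lambda_\mC^* V$.

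First I would pin down the adjoint. Comparing inner products of $\ee_k\omega$ against basis $(p{+}1)$-forms, one gets
$$\ees_k(e_{i_1}\wedge\cdots\wedge e_{i_p})=\sum_{j:\,i_j=k}(-1)^{j-1}\,e_{i_1}\wedge\cdots\widehat{e_{i_j}}\cdots\wedge e_{i_p},$$
so that $\ees_k$ is the standard contraction $\iota_{e_k}$. The relation $\{\ee_k,\ee_l\}=0$ is then immediate from the antisymmetry $e_k\wedge e_l=-e_l\wedge e_k$ of the wedge product, and taking adjoints yields $\{\ees_k,\ees_l\}=0$.

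For $\{\ee_k,\ees_l\}=\delta_{kl}$ I would split on whether $k=l$. When $k\ne l$, each basis element is either annihilated by both $\ee_k\ees_l$ and $\ees_l\ee_k$, or the two nonzero contributions appear with opposite signs because inserting (or removing) $e_k$ shifts the position of $e_l$ by exactly one, and vice versa. When $k=l$, a basis $p$-form containing $e_k$ is killed by $\ee_k$ but returned by $\ee_k\ees_k$, while one not containing $e_k$ is killed by $\ees_k$ but returned by $\ees_k\ee_k$; in both cases the sum gives back the basis element. The universal book-keeping trick is to move the inserted or extracted $e_k$ into the leftmost slot before applying the second operator, so every sign one must track is $(-1)^0=+1$.

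Finally, for the identity $\sum_k\ees_k\ee_k=\sum_p(n-p)\id_{\Lambda^p_\mC V}$, I would evaluate on a fixed basis $p$-form $\omega=e_{i_1}\wedge\cdots\wedge e_{i_p}$. If $k\in\{i_1,\ldots,i_p\}$ then $\ee_k\omega=0$; if $k\notin\{i_1,\ldots,i_p\}$ then $\ee_k\omega=e_k\wedge\omega$ has $e_k$ already in first position, so $\ees_k$ returns $\omega$ with coefficient $+1$. Hence $\ees_k\ee_k\omega=\omega$ for exactly the $n-p$ indices $k$ absent from the index set of $\omega$, which gives the claim. The only real obstacle in the whole proof is the sign book-keeping in the mixed CAR relation with $k\ne l$, and this is cleanly disposed of by the leftmost-slot normalization above, after which everything reduces to counting indices.
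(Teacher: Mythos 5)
Your proof is correct and follows essentially the same route as the paper's: both verify everything by direct computation of $\ee_k$ and $\ees_k=\iota_{e_k}$ on the orthonormal basis of wedge monomials, the paper merely declaring the CAR relations "straightforward" and organizing the final sum as $\sum_k\ees_k(e_k\wedge x)=nx-px$ in one stroke where you split into the cases $k$ present/absent. The extra detail you supply (the explicit contraction formula for the adjoint and the sign bookkeeping for $\{\ee_k,\ees_l\}$) is exactly what the paper leaves implicit.
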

\begin{proof}
That the $\ee_k$ and $\ees_k$ fulfil the relations of the CAR algebra is straightforward to show. Further, if $x=e_{k_1}\wedge\ldots\wedge e_{k_p}$, then
\begin{align*}\sum_k\ees_k\ee_k(x)=&\sum_k \ees_k(e_k\wedge e_{k_1}\wedge\ldots\wedge e_{k_p})\\
=&nx-\sum_{q=1}^p(-1)^{q-1}\delta_k^{k_q}e_k\wedge e_{k_1}\wedge\ldots\wedge\hat e_{k_q}\wedge\ldots\wedge e_{k_p}\\
=&nx-\sum_{q=1}^pe_{k_1}\wedge\ldots\wedge e_{k_p}\\
=&(n-p)x.
\end{align*}
\end{proof}
\begin{Prop}\label{gammarel} Let $\{e_k\}$ be a basis of $V$. Then the CAR algebra associated to this basis yields anticommuting representations of the Clifford algebras $\Cliff(V,-\Scp)$ and $\Cliff(V,\Scp)$ by setting\index{$\hat\gamma,\;\check\gamma$}
$$\hat\gamma_k:=\ee_k-\ees_k\text{ or } \check\gamma_k:=\ee_k+\ees_k.$$
Furthermore, we have the relations
$$\{\hat\gamma_k,\check\gamma_l\}=0\;\text{ and }\;\delta^{kl}\hat\gamma_k\check\gamma_l=n-2\sum_k\ees_k\ee_k.$$
\end{Prop}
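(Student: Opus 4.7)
The plan is to derive all four identities directly from the CAR relations of Proposition~\ref{carrelations}, using nothing more than bilinearity of the anticommutator and the sign conventions stated at the start of the subsection.

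First I would verify the two Clifford relations in parallel. Expanding $\{\check\gamma_k,\check\gamma_l\}=\{\ee_k+\ees_k,\ee_l+\ees_l\}$ by bilinearity and using $\{\ee_k,\ee_l\}=0=\{\ees_k,\ees_l\}$ leaves only the two cross-terms, each equal to $\delta_{kl}$, giving $2\delta_{kl}$; under our convention $v^2=+\langle v|v\rangle$ this is exactly the defining relation of $\Cliff(V,\Scp)$, so sending $e_k\mapsto\check\gamma_k$ extends to a representation by universality. The identical expansion for $\hat\gamma_k=\ee_k-\ees_k$ flips the sign of both cross-terms, producing $\{\hat\gamma_k,\hat\gamma_l\}=-2\delta_{kl}$, which is the Clifford relation for $(V,-\Scp)$. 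One more such expansion, now with one $+$ sign and one $-$ sign, makes the two cross-terms cancel, giving the anticommutation $\{\hat\gamma_k,\check\gamma_l\}=0$.

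For the sum identity I would multiply out $\hat\gamma_k\check\gamma_k=(\ee_k-\ees_k)(\ee_k+\ees_k)$. The $\ee_k^2$ and $\ees_k^2$ terms vanish by the CAR relations, leaving $\ee_k\ees_k-\ees_k\ee_k$; rewriting $\ee_k\ees_k=1-\ees_k\ee_k$ and summing over $k$ yields $n-2\sum_k\ees_k\ee_k$, which is precisely $\delta^{kl}\hat\gamma_k\check\gamma_l$ in Einstein convention.

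There is no substantive obstacle; the entire argument is bookkeeping with the CAR relations. The only point to be careful about is the sign convention: the minus sign in $\{\hat\gamma_k,\hat\gamma_l\}=-2\delta_{kl}$ is exactly what forces $\hat\gamma$ to land in $\Cliff(V,-\Scp)$ rather than $\Cliff(V,\Scp)$, and it is worth noting that the final identity, combined with Proposition~\ref{carrelations}, rewrites $\sum_k\hat\gamma_k\check\gamma_k$ in terms of the grading degree $p$ on $\Lambda^p_\mC V$, which will presumably be useful later.
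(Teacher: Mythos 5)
Your proof is correct and matches the paper's approach, which simply asserts that the proposition ``follows from a direct calculation'' — the direct calculation being precisely the expansion by bilinearity of the anticommutator from the CAR relations that you carry out. All four sign checks and the summation step are accurate, and your closing observation about combining the final identity with Proposition~\ref{carrelations} correctly anticipates how it is used in Proposition~\ref{uglyrelation}.
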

\begin{proof} Follows from a direct calculation.
\end{proof}
This yields
\begin{Cor}\label{cliffrep} There are two anticommuting representations $c_+$ and $c_-$ of $\Cliffc(V)$ on $\Lambda_\mC^* V$ induced by
$$c_+(v) :=\ee(v)+\ees(v)\text{ and }c_-(v):= i(\ee(v)-\ees(v)).$$
\end{Cor}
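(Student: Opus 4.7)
The plan is to reduce the statement directly to Proposition \ref{gammarel}, since both $c_\pm$ are $\mR$-linear in $v \in V$ and agree, on an orthonormal basis, with the operators $\check\gamma_k$ and $i\hat\gamma_k$ respectively.

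First I would observe that the maps $v \mapsto \ee(v)$ and $v \mapsto \ees(v) = \ee(v)^*$ are $\mR$-linear from $V$ to $\mB(\Lambda_\mC^* V)$, so both $c_+$ and $c_-$ are well-defined $\mR$-linear maps. Picking an orthonormal basis $\{e_k\}$ of $V$, we have $c_+(e_k) = \ee_k + \ees_k = \check\gamma_k$ and $c_-(e_k) = i(\ee_k - \ees_k) = i\hat\gamma_k$. By Proposition \ref{gammarel}, the $\check\gamma_k$ satisfy $\check\gamma_k\check\gamma_l + \check\gamma_l\check\gamma_k = 2\delta_{kl}$, i.e., the defining Clifford relations of $\Cliff(V,\Scp)$; hence the universal property produces an $\mR$-algebra homomorphism $\Cliff(V,\Scp) \to \mB(\Lambda_\mC^* V)$ extending $c_+$, and complexification yields a representation of $\Cliffc(V)$.

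For $c_-$, Proposition \ref{gammarel} gives $\hat\gamma(v)^2 = -\langle v|v\rangle$ (since $\hat\gamma$ represents $\Cliff(V,-\Scp)$), so
\[
c_-(v)^2 \;=\; i^2\,\hat\gamma(v)^2 \;=\; -\bigl(-\langle v|v\rangle\bigr) \;=\; \langle v|v\rangle,
\]
and the same universal-property argument, followed by complexification, produces a representation of $\Cliffc(V)$. The anticommutation of the two representations then follows from the relation $\{\hat\gamma_k,\check\gamma_l\} = 0$ in Proposition \ref{gammarel}: for any $v,w \in V$,
\[
\{c_+(v),c_-(w)\} \;=\; i\,\{\check\gamma(v),\hat\gamma(w)\} \;=\; 0
\]
by $\mR$-bilinear extension from the basis.

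There is no real obstacle here; the only minor point to check is that the basis-dependent formulas of Proposition \ref{gammarel} actually give representations in the basis-free sense, i.e., that the choice of orthonormal basis is immaterial. This is automatic since $\ee(v)$ and $\ees(v)$ are intrinsically defined and $\mR$-linear in $v$, so the Clifford relation verified on a basis extends to all $v \in V$ by bilinearity.
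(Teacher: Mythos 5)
Your proof is correct and matches the route the paper intends: the corollary is stated directly after Proposition \ref{gammarel} with only the cue ``This yields,'' and the content you supply — that $c_+(e_k)=\check\gamma_k$ and $c_-(e_k)=i\hat\gamma_k$, that each family satisfies the $\Cliff(V,\Scp)$ relations (directly for $\check\gamma$, after the $i$-twist for $\hat\gamma$), that the representations extend to $\Cliffc(V)$ by complexification, and that anticommutation follows from $\{\hat\gamma_k,\check\gamma_l\}=0$ — is precisely what that cue leaves to the reader. One small phrasing point: the quadratic relation $c_\pm(v)^2=\langle v|v\rangle$ is not itself bilinear, so the basis-to-general extension really goes through the polarized relation $\{c_\pm(v),c_\pm(w)\}=2\langle v|w\rangle$, which you did verify on the basis.
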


\subsection{Bott periodicity}

For details concerning $KK$-theory, we refer to \cite{KaspOp}; the unbounded picture was developed in \cite{BaajJulg}.

Let $V$ be a euclidean space. We set $\hsp:=\Omega_{\mC}(V):=L^2(V)\hat\otimes\Lambda_\mC^*V$ and $B:=\fn_0(V)\hat\otimes\Lambda_\mC^*V$. Here we view $\hsp$ as graded into forms of even and odd degree, and $B$ carries the grading coming from the canonical grading on the Clifford algebra. 
Note that $c_+(v)=\ee(v)+\ee(v)^*$ corresponds to multiplication on the left under the identification $\Lambda_\mC^*V$ with $\mC_n$. We denote $c_+$ and $c_-$ the extensions of $c_+$ and $c_-$ to representations of  $A:=\fn_0(V)\hat\otimes\Cliffc (V)$ on  $\hsp$. We denote $f:V\to \Cliffc(V)$ the inclusion of $V$ into its complex Clifford algebra.
\begin{Def} We define the unbounded regular operators $D_1^0$ in $\hsp$  and $D_2$ in $B$ by
$$\Dom(D_1^0)=\fn^\infty_c(V)\hat\otimes \Lambda_\mC^*V\text{ and }\Dom(D_2):=\{\omega\in B|f\omega\in B\}$$
and set
$$D_1^0:=d+d^*=\hat\gamma^i\partial_i\text{ and }D_2:=M_f$$
where $M_f$ denotes the operator of multiplication by $f$. Then $D_1^0$ is essentially self adjoint, and we denote by $D_1$ its closure.\index{$D_1,\; D_2$}
\end{Def}

\begin{Lem} $(\hsp,c_+,D_1)$ is an unbounded Kasparov $(\fn_0(V)\hat\otimes \Cliffc (V),\mC)$-module.
\end{Lem}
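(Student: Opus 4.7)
The plan is to verify the four defining properties of an unbounded Kasparov $(A,\mC)$-module in turn: that $c_+$ extends to a graded $*$-representation of $A = \fn_0(V)\hat\otimes\Cliffc(V)$ on $\hsp$, that $D_1$ is odd and self-adjoint and regular, that graded commutators with elements of a dense subalgebra are bounded, and that the resolvent is locally compact.

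First I would define the action of $A$ by tensoring pointwise multiplication on $L^2(V)$ with the Clifford action $c_+$ from Corollary \ref{cliffrep}. Self-adjointness of $c_+(v)=\ee(v)+\ee(v)^*$ for $v\in V$ shows that $c_+$ is $*$-preserving, and since $\ee(v)$ and $\ee(v)^*$ are odd with respect to the degree grading on $\Lambda_\mC^* V$, while the Clifford grading is induced by $v\mapsto -v$, the representation is graded. Self-adjointness of $D_1$ is assumed, and oddness is immediate since $d$ raises and $d^*$ lowers form degree by one; regularity is automatic because $\hsp$ is a Hilbert space over $\mC$.

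The main computation is the boundedness of graded commutators on the dense subalgebra $\fn^\infty_c(V)\otimes_{\text{alg}}\Cliffc(V)$. For a generator $1\otimes v$ with $v\in V$, one has $c_+(1\otimes v)=\check\gamma(v)$ in the notation of Proposition \ref{gammarel}, and $D_1=\hat\gamma^i\partial_i$. Since $D_1$ and $c_+(1\otimes v)$ are both odd, the graded commutator is the anticommutator, and using the key relation $\{\hat\gamma_k,\check\gamma_l\}=0$ from Proposition \ref{gammarel} together with the fact that $\check\gamma(v)$ has constant coefficients, one gets $\{D_1,c_+(1\otimes v)\}=0$. For a generator $g\otimes 1$ with $g\in\fn^\infty_c(V)$, the graded commutator reduces to the ordinary commutator $\sum_k\hat\gamma^k M_{\partial_k g}$, which is bounded because $\partial_k g$ is a bounded function. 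These two calculations together yield boundedness of the graded commutator on the whole dense subalgebra.

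Finally, for local compactness, I would observe that $D_1^2$ acts as $\Delta\otimes\id_{\Lambda_\mC^*V}$, where $\Delta=-\sum\partial_k^2$ is the scalar Laplacian on $V$, because the $\hat\gamma^k$ anticommute and square to $-1$; consequently $(1+D_1^2)^{-1}=(1+\Delta)^{-1}\otimes\id$. For $g\in\fn_0(V)$, the operator $M_g(1+\Delta)^{-1}$ is compact on $L^2(V)$ by the standard argument combining Rellich's theorem with approximation of $g$ by compactly supported functions, and tensoring with the bounded action of $\Cliffc(V)$ on the finite-dimensional factor $\Lambda_\mC^*V$ preserves compactness. Hence $c_+(a)(1+D_1^2)^{-1}$ is compact for every $a\in A$. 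The only subtlety I anticipate is being careful that the dense subalgebra on which commutators are bounded is actually a dense subalgebra in the locally convex topology of $A$, which is clear here since $\fn^\infty_c(V)$ is dense in $\fn_0(V)$ and $\Cliffc(V)$ is finite-dimensional.
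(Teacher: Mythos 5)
Your proof is correct, and its overall skeleton (graded Leibniz reduction to generators, a commutator computation, then a Rellich-type compactness argument) matches the paper's; the interesting difference is in how the key commutator is handled. The paper reduces to $\mu=f\cdot e$ with $e$ a unit vector and then decomposes $\hsp=\Omega^*_\mC(E)\oplus\ee(e)(\Omega^*_\mC(E))$ with $E=\linspan(e)^\perp$, writing both $d+d^*$ and $c_+(\mu)$ as explicit $2\times 2$ matrices and reading off boundedness of the commutator. You instead factor $f\cdot e=(f\otimes 1)(1\otimes e)$ and observe that the constant Clifford factor contributes nothing, since $D_1=\hat\gamma^i\partial_i$ and $c_+(e_l)=\check\gamma_l$ have vanishing anticommutator by the relation $\{\hat\gamma_k,\check\gamma_l\}=0$ of Proposition \ref{gammarel}; only the scalar factor survives, giving the bounded operator $\sum_k\hat\gamma^kM_{\partial_k g}$. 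This is cleaner and makes transparent \emph{why} the commutator is bounded (the two Clifford actions anticommute), whereas the paper's matrix computation is more hands-on but self-contained. On compactness your routes are essentially equivalent: you compute $D_1^2=\Delta\otimes\id$ from $\{\hat\gamma_i,\hat\gamma_j\}=-2\delta_{ij}$ and quote Rellich, while the paper invokes negative-order pseudodifferential calculus; both leave implicit the standard passage from $a(1+D_1^2)^{-1}$ compact to $a(1+D_1^2)^{-1/2}$ compact (via $TT^*$ compact $\Leftrightarrow$ $T$ compact), which you might note for completeness.
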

\begin{proof} Let $\mu\in\fn^\infty(V)\hat\otimes \Cliffc (V)$. If $\mu$ is just a smooth function $h$, $[D_1
,c_+(h)]=\hat\gamma^i\partial_i(h)$, and is therefore bounded if $h$ has bounded derivatives. \ifthenelse{\boolean{notes}}{This follows easily as switching to the Clifford algebras and calculating $[d+d^*,f]\omega\hat= [D,f]\omega=D(f\omega)-fD\omega=D(f)\omega+fD\omega-fD\omega$.}{}

We may thus assume $\mu$ is not a scalar function, and write it as a product $\mu_1\cdots\mu_k$, where the $\mu_i$ are forms of degree one. Hence 
$$[D_1,c_l(\mu)]=\sum_{i=1}^k(-1)^{i-1}c_l(\mu_1\cdots\mu_{i-1})[D_1,c_l(\mu_i)]c_l(\mu_{i+1}\cdots\mu_{k}),$$
and we may therefore reduce to the case where $\mu$ is of the form $f\cdot e$ for a smooth function $f$. We set $E:=\linspan(e)^\perp$ and decompose $\hsp=\Omega_{\mC}^*(E)\oplus \ee(e)(\Omega_{\mC}^*(E))$. We may further assume that $e=e_1$, where $e_1$ is the first basis vector of $\mR^n$, as the definition of $d+d^*$ is independent of the choice of a basis. With respect to this decomposition, $(d+d^*)$ and $c_l(\mu)$ decompose as
\begin{align*}c_l(he_1)=c_l(h)c_l(e_1)=\mb h&\\&h\me\mb&1\\1&\me=\mb & h\\h&\me\end{align*}
and
\begin{align*}\,\mb (d_E+d_E^*)&-\partial_1\\\partial_1&-(d_E+d_E^*)&\me,
\end{align*}
where for a real vector space $V$, $d_V$ denotes the complexified exterior derivative on $\Omega_{\mC}^*(V)$. From this it is easily seen that the commutator is bounded.

For $f$  a smooth function of compact support, $f(1+D_1^2)^{-1}f^*$ is compact, being a pseudodifferential operator of negative order on a compact manifold. Thence  $c_+(\mu)(1+D_1^2)^{-1/2}$ is compact for all $\mu\in C_c^\infty(V)\hat\otimes \Cliffc  (V)$, and the result extends to $C_0(V)\hat\otimes \Cliffc (V)$ by continuity.
\end{proof}
\note{\begin{Rem} For the smooth scalar valued function $h(x):=\frac{1}{x} \sin(x^3)\in \fn_0(\mR)\hat\otimes\mC_1$, the commutator $[D_1,h]$ is not a bounded function. Hence the condition that $[a,D]$ be bounded only on a dense subset in the definition of an unbounded module.
\end{Rem}}{}
\begin{Def} We set \index{$x_n,\; y_n$}
\[x_n:=[(\hsp,c_+,D_1)]\in KK(S^n\hat\otimes\mC_n,\mC) \text{ and }y_n:=[(B,1,D_2)]\in KK(\mC,S^n\hat\otimes\mC_n).\]\end{Def}

The elements $x_n$ and $y_n$ are multiplicative (up to sign, which we neglect):
$$x_n\cup x_m=x_{n+m}\text{ and }y_n\cup y_m=y_{n+m}.$$
Hence they can be built up by iterated cup products of the one dimensional $x_1$ and $y_1$: More explicitly, on the Hilbert space $\hsp_1:=L^2(\mR)\oplus L^2(\mR)$, consider the unbounded, densely defined operator 
$$\mb &-\partial\\\partial&\me:\; (g,h)\mapsto (-\partial h,\partial g).$$
This corresponds exactly to the operator $d+d^*$ under the isomorphism 
$$\phi:L^2(\mR)\oplus L^2(\mR)\to L^2(\mR)\hat\otimes\Lambda_\mC^*\mR, \; (g,h)\mapsto g+hdx.$$
If we identify $\fn_0(\mR)\hat\otimes\mC_1$ with $\fn_0(\mR)\oplus \fn_0(\mR)$ with the standard odd grading, and let the first summand act by diagonal matrices, the second by off-diagonal ones, we obtain again a Kasparov module. Denote the coordinates on a $k$'th copy $\hsp_k$ of $\hsp=L^2(\mR)$   by $x^k$, set 
$$S_k:=1\hat\otimes\cdots\hat\otimes\mb & -\partial_k\\\partial_k&\me\hat\otimes\cdots\hat\otimes 1,$$
where the only entry not equal to $1$ is in the $k$-th tensor-factor. Define 
$$S :=\sum_{k=1}^n S_k\in\mathbb{B}(\bigotimes\limits_{k=1}^n \hsp_k).$$
Under the isomorphism
$$\hat\bigotimes_{k=1}^n \hsp_k \to L^2(\mR^n)\hat\otimes \Lambda_\mC^*\mR^n,\; (g_1,h_1)\hat\otimes\cdots\hat\otimes (g_n,h_n)\mapsto (g_1+h_1 dx^1)\cdots(g_n+h_ndx^n)$$
this operator is easily seen to coincide with $d+d^*$. The Clifford algebras are well known to be multiplicative (for the graded tensor product).

Similarly, we may decompose $\fn_0(\mR^n)\hat\otimes \mC_{n}\approx \hat\bigotimes_{k=1}^n \fn_0(\mR)\hat\otimes \mC_{1}$, and hence get multiplicativity of the elements $y_n$ and a decomposition of the (unbounded) operator 
$$T:=\sum_{k=1}^n 1\hat\otimes\cdots\hat\otimes \mb & {u_k} \\ {u_k} & \me\hat\otimes\cdots \hat\otimes 1,$$
where the $u_k$ are the coordinate functions. Note the crucial difference in the sign conventions between the definition of $D_1=S$ and $D_2=T$.

\begin{Prop}\label{uglyrelation} The operators $D_1$ and $c_+(f)$ in $\hsp$ fulfil the following relations
$$[D_1,c_+(f)]=\sum_{p}(-n+2p)\id_{\Omega^p_\mC(V)}\text{ and }(D_1+c_+(f))^2=H+[D_1,c_+(f)],$$
where $H=-\Delta+x^2$.
\end{Prop}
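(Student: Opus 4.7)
The plan is to unpack both identities in an orthonormal basis $\{e_k\}$ of $V$ and reduce everything to the Clifford/CAR relations already established in Section \ref{prepcliff}. I write $c_+(f)$ as the multiplication operator by the Clifford-valued function $x\mapsto c_+(x)=\sum_j x^j\check\gamma_j$ (here $x^j$ are the coordinate functions and $\check\gamma_j=c_+(e_j)=\ee_j+\ees_j$ by Corollary \ref{cliffrep}), and recall $D_1=\hat\gamma^i\partial_i$ with $\hat\gamma^i=\ee_i-\ees_i$. Since both operators are odd, the bracket $[D_1,c_+(f)]$ is to be read as the graded commutator, i.e.\ the anticommutator $\{D_1,c_+(f)\}$; this reading is in fact forced by the second identity, since expanding $(D_1+c_+(f))^2$ produces exactly this symmetric cross term.

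For the first identity I would compute $\{D_1,c_+(f)\}$ directly on the natural core $\fn_c^\infty(V)\hat\otimes\Lambda_\mC^*V$. The coordinate functions $x^j$ commute with the Clifford generators, and the two orderings differ by rewriting $\check\gamma_j\hat\gamma^i=-\hat\gamma^i\check\gamma_j$ via $\{\hat\gamma^i,\check\gamma_j\}=0$ from Proposition \ref{gammarel}; hence the terms in which $\partial_i$ passes $x^j$ without contributing a derivative cancel, and the only surviving contribution comes from the canonical commutation relation $[\partial_i,x^j]=\delta_i^j$, yielding $\sum_i\hat\gamma^i\check\gamma_i$. The second relation of Proposition \ref{gammarel} rewrites this as $n-2\sum_k\ees_k\ee_k$, which by Proposition \ref{carrelations} equals $\sum_p(-n+2p)\,\id_{\Omega_\mC^p(V)}$, as claimed.

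For the second identity I would expand
$$
(D_1+c_+(f))^2=D_1^2+c_+(f)^2+\{D_1,c_+(f)\}.
$$
Since $\hat\gamma^i$ generates a representation of $\Cliff(V,-\Scp)$ (Proposition \ref{gammarel}), one has $\{\hat\gamma^i,\hat\gamma^j\}=-2\delta^{ij}$ and hence $D_1^2=\hat\gamma^i\hat\gamma^j\partial_i\partial_j=-\Delta$; since $c_+$ represents $\Cliff(V,+\Scp)$ (Corollary \ref{cliffrep}), the square of the Clifford vector $\sum_j x^j e_j$ under $c_+$ is the scalar $\sum_j (x^j)^2=|x|^2$, so $c_+(f)^2$ is multiplication by $x^2$. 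Combining with the first identity gives $-\Delta+x^2+[D_1,c_+(f)]=H+[D_1,c_+(f)]$.

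The main potential obstacle is purely bookkeeping: keeping straight which Clifford algebra ($\Cliff(V,\pm\Scp)$) each family of generators $\hat\gamma$, $\check\gamma$ belongs to, reading the bracket as graded, and verifying that the formal manipulations make sense on the chosen core of smooth compactly supported sections where $D_1$ is essentially self-adjoint and all products are defined. Once these conventions are fixed, nothing beyond the identities of Section \ref{prepcliff} and $[\partial_i,x^j]=\delta_i^j$ is required.
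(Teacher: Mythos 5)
Your proposal is correct and follows essentially the same computation as the paper: expand the graded commutator on the smooth core, use $[\partial_i,x^j]=\delta_i^j$ together with $\{\hat\gamma^i,\check\gamma_j\}=0$ to isolate $\sum_i\hat\gamma^i\check\gamma_i$, and then apply the second relation of Proposition \ref{gammarel} and Proposition \ref{carrelations} to get $\sum_p(-n+2p)\id_{\Omega^p_\mC(V)}$; the second identity is the routine expansion $D_1^2+c_+(f)^2+\{D_1,c_+(f)\}=-\Delta+x^2+[D_1,c_+(f)]$ using $\{\hat\gamma^i,\hat\gamma^j\}=-2\delta^{ij}$ and $\{\check\gamma_i,\check\gamma_j\}=2\delta_{ij}$. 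You spell out a few steps the paper leaves implicit (in particular the $D_1^2=-\Delta$ and $c_+(f)^2=x^2$ calculations and the observation that the bracket must be graded), but the method is identical.
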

\begin{proof}
Using the relations from Proposition \ref{gammarel} and Proposition \ref{carrelations}, we get
\begin{align*}
[D_1,c_+(f)]=&\hat\gamma^i\partial_i(x_j\check\gamma^j)+x_j\check\gamma^j\hat\gamma^i\partial_i\\
=&\delta_{i,j}\hat\gamma^i\check\gamma^j+x_j(\hat\gamma^i\check\gamma^j+\check\gamma^j\hat\gamma^i)\partial_i\\
=&\delta_{i,j}\hat\gamma^i\check\gamma^j\\
=&n-2\sum_i\ees_i\ee_i\\
=&\sum_{p}(-n+2p)\id_{\Omega^p_\mC(V)}
\end{align*}
\note{Also
\begin{align*} (D_1+c_+(f))^2=D_1^2+c_+(f)^2+[D_1,c_+(f)]=-\Delta+x^2+[D_1,c_+(f)]
\end{align*}}
\end{proof}
\note{
\begin{Prop} The class of $H-1$ is one and the operator $\bar D_1$ defined as the closure of $d+d^*+c_+(x)$ has summable resolvent.
\end{Prop}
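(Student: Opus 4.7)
The plan rests on the identity from Proposition \ref{uglyrelation}, namely
\[\bar D_1^2 \;=\; H + [D_1,c_+(f)] \;=\; H + \sum_p (-n+2p)\,\id_{\Omega^p_\mC(V)},\]
which decomposes $\bar D_1^2$ as a direct sum, over form-degree, of shifted harmonic oscillators on $L^2(V)$. First I would establish essential self-adjointness of $\bar D_1$ on the Schwartz core $\mcS(V)\hat\otimes\Lambda^*_\mC V$. Since $\bar D_1^2$ preserves the form-degree decomposition and acts on $\Omega^p_\mC(V)$ as $H+(2p-n)$, a complete orthonormal basis of eigenvectors of $\bar D_1^2$ is obtained from tensor products of Hermite functions with basis elements of $\Lambda^*_\mC V$; essential self-adjointness of $\bar D_1$ then follows in the standard way from this explicit diagonalisation on a dense invariant subspace.

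For summability of the resolvent I would use the explicit spectrum of $H$ on $L^2(V)$, namely the eigenvalues $n+2k$ for $k\ge 0$ with multiplicities $\binom{n-1+k}{n-1}$. Consequently $\bar D_1^2$ on $\Omega^p_\mC(V)$ has spectrum $\{2p+2k : k\ge 0\}$ with the same polynomial multiplicities (times $\binom{n}{p}$ from the form-degree), and summing over the finite range $0\le p\le n$ one checks that
\[\mathrm{tr}\bigl((1+\bar D_1^2)^{-s/2}\bigr)<\infty\qquad\text{for every }s>n.\]
Hence $\bar D_1$ has $s$-summable resolvent for every $s>n$, in particular summable in the required sense.

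For the assertion that ``the class of $H-1$ is one'' I interpret this as the statement that the unbounded Kasparov $(\mC,\mC)$-cycle built from $(\hsp,c_+,\bar D_1)$ represents the generator $1\in KK(\mC,\mC)\cong\mZ$; in the one-dimensional case the relevant squared operator on the even-form sector is literally $H-1$, and the multiplicativity of $x_n$ and $y_n$ propagates this to all $n$. To identify the class with $1$, I compute the super-index of $\bar D_1$ with respect to the even/odd form grading. The kernel of $\bar D_1^2$ lies entirely in $\Omega^0_\mC(V)$, where $\bar D_1^2=H-n$ has one-dimensional kernel spanned by the Gaussian ground state $e^{-|x|^2/2}$. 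Since this vector is even-graded and no odd-form sector contributes to $\ker\bar D_1$, the super-$\ind$ of $\bar D_1$ equals $+1$.

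The main obstacle is the rigorous passage from this Fredholm/spectral picture to the Kasparov-theoretic one: this requires invoking the unbounded Kasparov formalism of \cite{BaajJulg} to realise $\bar D_1$ as an unbounded cycle and to identify its bounded transform $\bar D_1(1+\bar D_1^2)^{-1/2}$ as a bounded $KK$-cycle, after which the compact resolvent (automatic from the discrete spectrum above) guarantees that its class in $KK(\mC,\mC)$ coincides with the super-index computed above.
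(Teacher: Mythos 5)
Your argument follows essentially the same route as the paper's own (note-form) proof: both reduce everything to the spectral theory of the quantum harmonic oscillator via Hermite functions and ladder operators. The paper only sketches the one-dimensional computation ($a=\tfrac{1}{\sqrt2}(x+\partial)$, $N=a^*a=\tfrac12(H-1)$, kernel spanned by the Gaussian), which is exactly what reappears in the proof of Theorem \ref{Bottperiodicity}; you instead carry out the $n$-dimensional case explicitly via the degree decomposition $\bar D_1^2=H+(2p-n)$ on $\Omega^p_\mC(V)$ from Proposition \ref{uglyrelation}. Your identification of the kernel with the Gaussian ground state sitting in degree zero, and of the super-index as $+1$, is correct and matches the paper's index computation.

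One quantitative slip in the summability estimate: the eigenvalue $2k+2p$ of $\bar D_1^2$ has multiplicity $\binom{n}{p}\binom{n-1+k}{n-1}\sim Ck^{n-1}$, so $\sum_k k^{n-1}(1+2k)^{-s/2}$ converges precisely when $s/2-(n-1)>1$, i.e.\ $s>2n$ --- equivalently, the Weyl law for the $n$-dimensional oscillator is $N(\lambda)\sim c\lambda^{n}$, so $(1+\bar D_1^2)^{-1/2}$ lies in $\mathcal{L}^s$ only for $s>2n$, not $s>n$ as you assert. Since the proposition only claims finite summability of the resolvent, this does not affect the conclusion, but the stated threshold should be corrected. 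A second, minor point: the paper defines $\bar D_1$ as the closure from the compactly supported smooth forms, whereas your essential self-adjointness argument runs on the Hermite/Schwartz core; one should note (by the standard cutoff approximation) that the two closures coincide.
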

\begin{proof}
This follows essentially from the theory of the quantum harmonic oscillator. Hence we define as usually the ladder operator $a:=\frac{1}{\sqrt{2}}(x+\partial)$ and the particle number operator $N:=a^*a$. Further the Hermite functions are defined in terms of the Hermite polynomials $H_n$ by $\psi_n:=C_ne^{-\frac{x^2}{2}} H_n$, where $C_n:=(n!2^n\sqrt{\pi})^{-1/2}$. Then we have the classical relation
$a^*\psi_k=\sqrt{k}\psi_{k-1}$, and using $[N,(a^*)^n]=n(a^*)^n$ one easily shows that $a\psi_k=\sqrt{k}\psi_{k-1}$.
\end{proof}}{}
We recall:
\begin{Lem}\label{Hermite} The Hermite functions $\psi_n$ are an orthonormal basis of $L^2(\mR)$ such that
$$(-\partial^2+x^2-(2n+1))\psi_n=\psi_n.$$
\end{Lem}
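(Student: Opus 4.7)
The approach is the ladder-operator method from the theory of the quantum harmonic oscillator. Introduce the annihilation and creation operators
$$a := \frac{1}{\sqrt{2}}(x+\partial), \qquad a^* := \frac{1}{\sqrt{2}}(x-\partial)$$
on Schwartz space. A direct calculation gives $[a,a^*]=1$ and the factorisation $H := -\partial^2 + x^2 = 2a^*a + 1 = 2N+1$ with $N := a^*a$, so the stated eigenvalue relation is equivalent to $N\psi_n = n\psi_n$.

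For the construction, the normalised Gaussian $\psi_0(x) := \pi^{-1/4}e^{-x^2/2}$ lies in $L^2(\mR)$ and satisfies $a\psi_0 = 0$. Setting $\psi_n := \frac{1}{\sqrt{n!}}(a^*)^n\psi_0$, the commutator identities $[N,a^*]=a^*$ and $[N,a]=-a$ yield inductively
$$a^*\psi_n = \sqrt{n+1}\,\psi_{n+1}, \qquad a\psi_n = \sqrt{n}\,\psi_{n-1}, \qquad N\psi_n = n\psi_n,$$
which gives the eigenvalue equation of the lemma. Orthonormality is then a bookkeeping exercise: in the pairing $\langle \psi_m,\psi_n\rangle = (m!\,n!)^{-1/2}\langle \psi_0,\, a^m(a^*)^n\psi_0\rangle$, iterated use of $[a,a^*]=1$ to push $a$'s to the right followed by $a\psi_0=0$ produces $\delta_{mn}$.

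The main obstacle is completeness, i.e.\ that $\{\psi_n\}$ spans a dense subspace of $L^2(\mR)$. Since $\psi_n(x) = C_n H_n(x)e^{-x^2/2}$ with $H_n$ a polynomial of degree exactly $n$, the closed linear span of $\{\psi_n\}$ equals the $L^2$-closure of the functions of the form $p(x)e^{-x^2/2}$ with $p$ a polynomial. To show this is all of $L^2(\mR)$, I would take $f \in L^2(\mR)$ with $\langle f,\psi_n\rangle = 0$ for every $n$ and consider
$$F(z) := \int_{\mR} f(x)\,e^{-x^2/2}\,e^{-izx}\,dx.$$
The Gaussian factor makes $F$ entire on $\mC$ (differentiation under the integral sign is justified by dominated convergence), and the orthogonality hypothesis, applied to the Taylor coefficients of the exponential in the integrand, forces $F^{(n)}(0)=0$ for all $n$; hence $F\equiv 0$, so the Fourier transform of $f\cdot e^{-x^2/2}$ vanishes, and $f=0$. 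Alternatively, one may invoke that $H=-\partial^2+x^2$ is a Schr\"odinger operator with confining potential, hence has compact resolvent on $L^2(\mR)$; its eigenfunctions therefore form an orthonormal basis, and the ladder construction identifies each eigenspace with $\mC\cdot\psi_n$ by simplicity of the eigenvalues $2n+1$.
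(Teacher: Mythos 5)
Your proof is correct and is the standard ladder-operator argument for the quantum harmonic oscillator; the paper offers no proof of this Lemma at all (it is prefaced by ``We recall:'' and cited as classical), and your construction --- factorisation $H=2a^*a+1$, ground state annihilated by $a$, raising by $a^*$, and completeness via analyticity of the Fourier--Gauss transform --- is exactly the argument the author is implicitly invoking. One small point worth recording: as printed, the Lemma's display reads $(-\partial^2+x^2-(2n+1))\psi_n=\psi_n$, i.e.\ $H\psi_n=(2n+2)\psi_n$, which is a typo for $(-\partial^2+x^2-(2n+1))\psi_n=0$; you have (rightly) proved the corrected version $H\psi_n=(2n+1)\psi_n$, which is the one actually used later in the paper to conclude $\dim\Ker(-\partial^2+x^2-1)=1$.
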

\begin{Prop}\label{newdirac} If we define the unbounded operator $\bar D$\index{$\bar D$} on $\hsp$  as the closure of the operator $(d+d^*+c_+(x))$ with domain the smooth complex forms of compact support, then $(\hsp,1,\bar D)$ is an unbounded cycle representing the product $y_n\cap x_n$.
\end{Prop}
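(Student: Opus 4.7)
The plan is to identify $(\hsp,1,\bar D)$ as an unbounded representative of the Kasparov product $y_n\cap x_n$ via the Connes--Skandalis connection formalism of \cite{MR775126}. The internal tensor product of Hilbert modules is
$$B\hat\otimes_{\fn_0(V)\hat\otimes\Cliffc(V)}\hsp\;\cong\;\hsp,\qquad b\hat\otimes\eta\longmapsto c_+(b)\eta,$$
and the operator built from $(B,1,D_2)$ and $(\hsp,c_+,D_1)$ using the trivial connection, namely $D_2\hat\otimes 1+1\hat\otimes_\nabla D_1$, becomes under this identification precisely $c_+(f)+(d+d^*)=\bar D$, since $D_2=M_f$ acts through the left representation $c_+$.

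First I would verify that $(\hsp,1,\bar D)$ is a genuine unbounded Kasparov $(\mC,\mC)$-module. Essential self-adjointness on the core $\fn_c^\infty(V)\hat\otimes\Lambda_\mC^* V$ follows from Proposition~\ref{uglyrelation}: $\bar D^2=H+[D_1,c_+(f)]$, with $H=-\Delta+|x|^2$ essentially self-adjoint on the same core and $[D_1,c_+(f)]$ bounded self-adjoint. Compactness of $(1+\bar D^2)^{-1}$ reduces to compactness of $(1+H)^{-1}$, which is immediate from Lemma~\ref{Hermite}: products of Hermite functions in each coordinate form an orthonormal basis of $L^2(V)$ diagonalising $H$ with eigenvalues $n+2|\alpha|$, so $H$ has discrete spectrum accumulating only at $+\infty$; the finite-rank inflation by $\Lambda_\mC^* V$ and a bounded self-adjoint perturbation both preserve compact resolvent.

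To identify the resulting class as $y_n\cap x_n$ I would invoke Kucerovsky's form of the Connes--Skandalis criterion. The two conditions to check are: (a) the connection condition, namely boundedness of $[\bar D,c_+(\xi)]$ for $\xi$ in a dense subalgebra of $B$ (smooth compactly supported sections), which follows from the commutator estimate already used in the proof that $(\hsp,c_+,D_1)$ is a Kasparov module, combined with boundedness of $[c_+(f),c_+(\xi)]$ for $\xi$ of compact support (a Clifford-algebraic calculation); and (b) the positivity condition, namely that the graded anticommutator $\{c_+(f),\bar D\}$ is bounded below on the core. Expanding,
$$\{c_+(f),\bar D\}=\{c_+(f),D_1\}+2c_+(f)^2=[D_1,c_+(f)]+2|x|^2,$$
which is the sum of the bounded self-adjoint operator $[D_1,c_+(f)]$ from Proposition~\ref{uglyrelation} and the non-negative $2|x|^2$, hence bounded below.

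The main technical obstacle is the domain bookkeeping demanded by Kucerovsky's criterion: one must fix a single core for $\bar D$, $D_2\hat\otimes 1$ and $1\hat\otimes D_1$ on which all of the above algebraic identities hold simultaneously and which is dense in each of the relevant graph norms. The natural choice $\fn_c^\infty(V)\hat\otimes\Lambda_\mC^* V$ works here, and once the domains are in place the verification reduces entirely to the algebraic relations already established in Propositions~\ref{carrelations}, \ref{gammarel} and \ref{uglyrelation}.
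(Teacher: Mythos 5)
Your proposal is correct and rests on the same structural idea as the paper's proof --- identify $B\hat\otimes_{c_+}\hsp$ with $\hsp$ via $b\hat\otimes\eta\mapsto c_+(b)\eta$ and then verify a connection condition and a positivity condition --- but you carry it out entirely in the unbounded picture (Kucerovsky's form of the criterion), whereas the paper passes to the bounded transforms and applies the Connes--Skandalis criterion directly: it shows $q(\bar D)$ is a $B$-connection by exhibiting it as a compact perturbation of the transported operator $\theta^{-1}q(D_1)\theta$, and checks positivity of the commutator with $C=q(c_+(x))$ modulo compacts by means of an integral formula for $\bar D[T_0^{1/4},C]$. Your positivity check is noticeably more elementary: it reduces to the algebraic identity $\{c_+(f),\bar D\}=[D_1,c_+(f)]+2|x|^2$, bounded below on the core by Proposition~\ref{uglyrelation}, with no resolvent integrals. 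What your route costs is exactly the bookkeeping you flag at the end: Kucerovsky's connection condition is boundedness of $\bar D T_\xi-(-1)^{\partial\xi}T_\xi D_1$ (with $T_\xi\eta=c_+(\xi)\eta$), not of the graded commutator $[\bar D,c_+(\xi)]$ alone, so the cross term $(-1)^{\partial\xi}c_+(\xi)c_+(f)$ appears --- harmless since it is bounded for compactly supported $\xi$, but it should be written out --- and the criterion also demands $\Dom(\bar D)\subseteq\Dom(M_f\hat\otimes 1)$, which holds on the closure of the common core. Both arguments draw on the same two analytic inputs, the commutator estimates from the lemma establishing that $(\hsp,c_+,D_1)$ is a Kasparov module and the harmonic-oscillator spectrum of Lemma~\ref{Hermite} for the compact resolvent, so your version is a legitimate and arguably cleaner alternative to the one in the text.
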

\begin{proof}  As for every  $h$ of compact support and $v\in\mC_n$ the operator $h(\bar D-D_1)$ is bounded, $(\hsp,c_+,\bar D)$ is a spectral triple, being a bounded perturbation of $(\hsp,c_+,D_1)$. Further, $\bar D$ itself has compact resolvent by the above Lemma, hence $(\hsp,1,\bar D)$ is a spectral triple.

Recall that $B:=\fn_0(\mR^n)\hat\otimes\mC_n$, and denote by  $\theta:B\hat\otimes_{c_+}\hsp\to\hsp,\,\mu \overset{.}{\otimes}\omega\mapsto c_+(\mu)\omega$ the canonical isomorphism. Then $\theta^{-1}q(D_1)\theta$ is a $B$-connexion for $q(D_1)$, as 
$$\theta(\mu\overset{.}{\otimes}q(D_1)\omega-(-1)^{\partial \mu} \theta^{-1}q(D_1)\theta(\mu\overset{.}{\otimes}\omega))=[c_+(\mu),q(D_1)](\omega)$$
is compact. \note{This is essentially the proof (when we don't need to stabilize the module) that connections exist.}  Hence as $q(\bar D)$ is a compact perturbation of $q(D_1)$:
\begin{align*} &\theta(\mu\overset{.}{\otimes}q(D_1)\omega-(-1)^{\partial \mu} \theta^{-1}q(\bar D)\theta(\mu\overset{.}{\otimes}\omega))\\
&=[c_+(\mu),q(\bar D)]\omega-((-1)^{\partial\mu}(q(\bar D)-q(D_1))c_+(\mu))\omega
\end{align*}
is compact operators, showing that $q(\bar D)$ is a connection for $q(D)$.\note{The point is here: Even though the transported $D$ is a connection, we need to show that $\bar D$ is one. But only $a(q(D)-q(D'))$ is compact, hence the trick to transport via $\theta$ in order to be in the commutator, so one does multiply by some $a$.}{}
It remains only to check the positivity condition for $q(\bar D)$. The operator in $y_n$ corresponds, under $\theta$, to $C:=q(c_+(x))$, and we set  $T_t:=(1+t+\bar D^2)^{-1}\in\mathbb{K}(\hsp)$.  First of all, $\bar D[T^{1/4}_0,C]$ is compact, as
$$\bar D[T^{1/4}_0,C]=c\int_0^\infty \bar D T_t[C,\bar D^2]T_t\frac{dt}{t^{1/4}}$$
for a constant $c$; we have here applied the integral formula from \cite{UC}.\note{Here one needs to take care of the commutator with $\bar D^2$: $[C,\bar D^2]$! But $C=\frac{c_+(x)}{\sqrt{1+x^2}}$, hence it preserves the smooth functions with compact support (it is a pseudodifferential operator). Further, one could write this as $$\int \bar DT_t(\bar D[C,\bar D]+[C,\bar D]\bar D)T_t$$
the which makes sense (even though I have not found a proof of the equality).}{} Further $$T_0^{1/4}[C,c_+(x)]T_0^{1/4}=T_0^{1/4}\frac{x^2}{\sqrt{1+x^2}}T_0^{1/4}\geq 0.$$
 This shows that all the summands in the decomposition of the commutator are either compact or positive.
\end{proof}

\begin{Th}\label{Bottperiodicity} $x_n$ and $y_n$ yield $KK$-equivalences between $S^n\hat\otimes\mC_n$ and $\mC$, more precisely, $x_n\cap y_n=1_{S^n\hat\otimes \mC_n}$ and $y_n\cap x_n=1_{\mC}$.
\end{Th}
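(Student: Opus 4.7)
The plan is to establish the two equalities separately; set $A := S^n \hat\otimes \mC_n$. The first, $y_n \cap x_n = 1_{\mC}$, is handled by a direct Fredholm index computation from the representation furnished by Proposition \ref{newdirac}, exploiting the spectrum of the harmonic oscillator. The second, $x_n \cap y_n = 1_A$, is then reduced to the first by a rotation/exchange argument on $A \otimes A$.

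For the first equality, Proposition \ref{newdirac} represents the product by the unbounded cycle $(\hsp, 1, \bar D)$, where $\bar D$ is the closure of $d + d^* + c_+(x)$. Since $KK(\mC, \mC) \cong \mZ$ is detected by the Fredholm index of $\bar D^+ : \hsp^+ \to \hsp^-$, it suffices to show this index equals $1$. Proposition \ref{uglyrelation} gives
\[
\bar D^2 = H + \sum_p (2p - n)\,\id_{\Omega^p_{\mC}(V)}, \qquad H = -\Delta + |x|^2 .
\]
Combining Lemma \ref{Hermite} with the tensor decomposition along the $n$ coordinate directions, $\operatorname{spec}(H) = \{n + 2k : k \geq 0\}$ on $L^2(V)$, the minimum $n$ attained uniquely by the Gaussian $\psi_0^{\otimes n}$. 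Therefore $\bar D^2 \geq 2p\,\id$ on $\Omega^p_{\mC}(V)$, so $\ker \bar D$ is one-dimensional, spanned by $\psi_0^{\otimes n}$ regarded as a $0$-form. Since $0$-forms lie in $\hsp^+$, we obtain $\ind(\bar D^+) = 1$, whence $y_n \cap x_n = 1_{\mC}$.

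For the second equality, I use that tensoring with $\mC$ is trivial, so $x_n \cap y_n$ coincides with the external product $x_n \otimes y_n \in KK(A, A)$. The two presentations of this external product read
\[
x_n \otimes y_n = (x_n \otimes 1_A) \cap (1_A \otimes y_n) = (1_A \otimes y_n) \cap (x_n \otimes 1_A),
\]
while the multiplicativity of $\tau_A$ combined with the first identity yields
\[
(y_n \otimes 1_A) \cap (x_n \otimes 1_A) = \tau_A(y_n \cap x_n) = \tau_A(1_{\mC}) = 1_A .
\]
Comparing these, it suffices to show $1_A \otimes y_n = y_n \otimes 1_A$ in $KK(A, A \otimes A)$---equivalently, that the graded flip $\sigma$ on $A \otimes A \cong C_0(V \oplus V) \hat\otimes \Cliffc(V \oplus V)$ is trivial in $KK$. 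The flip is implemented by the swap $(v, w) \mapsto (w, v)$ on $V \oplus V$, which lies in $SO(2n)$ (up to a Clifford sign) and is therefore connected to the identity by a path of rotations; this gives a diffotopy of graded $*$-automorphisms of $A \otimes A$, and hence $\sigma_* = \id$ in $KK$, delivering $x_n \cap y_n = 1_A$.

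The main obstacle is the rotation argument: the graded signs in the isomorphism $\Cliffc(V) \hat\otimes \Cliffc(V) \cong \Cliffc(V \oplus V)$ must be tracked carefully, the $SO(2n)$-path of orthogonal transformations must be lifted to a diffotopy of graded $*$-automorphisms of $A \otimes A$ (using the principal bundle functoriality of $KK$ alluded to in the introduction), and the resulting diffotopy must be transported to the claimed $KK$-equality between the two external-product presentations. The first equality, by contrast, is a clean spectral computation once Proposition \ref{newdirac} is in hand.
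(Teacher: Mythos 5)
Your computation of $y_n\cap x_n=1_{\mC}$ is correct, and it is organized a little differently from the paper's: instead of first reducing to $n=1$ via multiplicativity of the $x_n,y_n$ and then computing the index of $\partial+x$, you bound $\bar D^2\geq 2p$ on $\Omega^p_\mC(V)$ directly from Proposition \ref{uglyrelation} together with the spectrum $\{n+2k\}$ of the $n$-dimensional harmonic oscillator, and read off that $\Ker\bar D$ is spanned by the Gaussian $0$-form, which lives in $\hsp^+$. Both routes rest on Proposition \ref{newdirac} and Lemma \ref{Hermite}; yours avoids the reduction to dimension one, which is a small but genuine simplification.

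The second half has a genuine gap. The graded flip $\sigma$ of $A\hat\otimes A\cong\fn_0(V\oplus V)\hat\otimes\Cliffc(V\oplus V)$ is induced functorially by the swap $(v,w)\mapsto(w,v)$ of $V\oplus V$ (the Koszul signs vanish on the generators $v\hat\otimes 1$ and $1\hat\otimes w$, so there is no ``Clifford sign'' to hide behind), and this swap has determinant $(-1)^n$. For odd $n$ --- in particular for the basic case $n=1$ --- it lies in $O(2n)\setminus SO(2n)$ and is \emph{not} connected to the identity, so the asserted homotopy $\sigma\simeq\id$ does not exist and $\sigma_*=\id$ cannot be obtained this way. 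This is exactly why Atiyah's rotation trick is needed: composing the swap with the reflection $1\oplus(-\id_V)$ lands in $SO(2n)$, giving only $\sigma\simeq 1\otimes\rho$ with $\rho$ induced by $-\id_V$. Feeding that into your comparison of the two presentations of the external product yields $x_n\cap y_n=\rho_*$ rather than $1_A$ --- an invertible element, but not visibly the identity. One must then add the closing step the paper uses: $x_n$ has the left inverse $y_n$ by part one and is right invertible because $x_n\cap y_n$ is invertible, hence $x_n$ is invertible with two-sided inverse $y_n$ and $x_n\cap y_n=1_A$. (Equivalently: $y_n\cap x_n=1_\mC$ makes $x_n\cap y_n$ an idempotent, and an invertible idempotent is the identity.) Without such a repair your argument covers only even $n$.
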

\begin{proof} We will calculate $y_n\cap x_n=1_{\mC}$ and deduce $x_n\cap y_n=1_{S^n\hat\otimes\mC_n}$ from this by the Atiyah rotation trick (cf. \cite{MR0228000}). As the elements $x_n$ and $y_n$ are multiplicative and the Kasparov product associative, it suffices to prove the case $n=1$.
 
By Proposition \ref{newdirac}, all we have to do is calculate the Fredholm index of $\bar D^{0}:=\partial+x$ on $L^2(\mR)$. \note{ This follows because the isomorphism $KK(\mC,\mC)\to \mZ$ is given by sending the operator $\mb &T^*\\T&\me$ to the Fredholm index of $T$, as can be seen from, for example \cite{BlackK}, Example 17.3.4, or even from Proposition \ref{indeximports} in the $C^*$-setting below; in order to pass from the index of the operator $q(D)$ to the index of $D$, it suffices to observe that $(1+D^*D)^{-1/2}$ is an operator onto $\Dom(D)$ that is injectif.}{}Now $\Ker(D^*D)=\Ker(D)$ for any densely defined unbounded operator. 
Hence by Lemma \ref{Hermite}
$$\dim \Ker(\bar D^{0})=\dim\Ker(-\partial^2+x^2-1)=1,$$
and $\dim\Coker(\bar D^{0})=0$, thus $\ind (\bar D^{(0)})=1$. This shows that $y_n\cap x_n=1_{\mC}$.

Now comes the rotation trick due to Atiyah, which deduces $x\cap y$ from $y\cap x$. Denote by $\sigma$ the flip of $(\fn_0(\mR)\hat\otimes \mC_1)\hat\otimes(\fn_0(\mR)\hat\otimes\mC_1)$.

Since $SO(n)$ acts on $\fn_0(\mR^n)\otimes\mC_n$, the complex unit $i$ induces another isomorphism $i_*$ of $(\fn_0(\mR)\hat\otimes \mC_1)\hat\otimes(\fn_0(\mR)\hat\otimes\mC_1)$.

The $*$-automorphisms $\sigma$ and $i_*$ are determined by their action on a subset which generates (as an algebra) a dense $*$-subalgebra; hence, denoting the basis vectors of $\mR^2$ by $e_1$ and $e_2$,  they are given by
\begin{align*}
\sigma:\,\,&f(x)  e_1\mapsto f(y)e_2\\
& g(y)e_2\mapsto g(x)e_1\\
i_*\,\,:&f(x)e_1\mapsto f(y)e_2\\
		&g(y)e_2\mapsto -g(-x)e_1.
\end{align*}
Therefore $i_*\circ (1\otimes \rho)=\sigma$, where $\rho$ denotes the automorphism of $\fn_0(\mR)\hat\otimes\mC_1$ induced by $-\id_\mR$.

Homotoping $i$ to $\id_\mC$, i.e., using that $SO(2)=S^1$ is connected, we see that $\sigma$ is homotopic to $1\otimes \rho$. Set  $D:=\fn_0(\mR^2)\hat\otimes\mC_2$. Then using that the cup-product   coincides with the cap-product over $\mC$,  we obtain
\begin{align*}x\cap y=y\cup x=&\tau^r_D(y)\cap\tau^l_D(x)\\
=&\tau^r_D(y)\cap \sigma_*(\tau^r_D(x))\\
=&\tau^r_D(y)\cap (1\otimes \rho)_*(\tau^r_D(x))\\
=&(1\otimes \rho)_*(\tau^r_D(y\cap x))\\
=&(1\otimes\rho)_*.
\end{align*}
Thence $x$ has $y$ as a left inverse by the first part of the proof, and it is right invertible also - consequently $y$ must be this right inverse as well.
\end{proof}

\begin{Rem} We have used that if $D$ is a densely defined unbounded operator with adjoint $D^*$, then $\Ker(D^*)=\Im (D)^\perp\cap \mc{D}(D^*)$, and therefore 
\begin{align*}&\Ker(D^*D)=\Ker(D)\cup(\Ker(D^*)\cap \Im(D))\\
=&\Ker(D)\cup(\Im(D)^\perp\cap \mc{D}(D^*)\cap \Im(D))=\Ker(D).
\end{align*}
\end{Rem}

\subsection{Some remarks concerning Bott Periodicity}
One may understand the form of the Dirac and Dual-Dirac a bit more conceptually as follows. For this, refine the definitions of section \ref{prepcliff} as follows: If $V$ is again a vector space with a positive definite scalar product, then denote now by $\ee_l(v)$ the exterior multiplication on the left by $v\in V$ on $\Lambda_\mC^*(V)$, and the exterior multiplication on the right by $\ee_r(v)$. We then also have their adjoints, accordingly denoted $\ee_l^*$ and $\ee_r^*$. 

By Corollary \ref{cliffrep}, we already have anticommuting representations of $\Cliffc(V)$ on $\Lambda^*_\mC(V)$ on the left by setting $c_+^l=\ee_l+\ee_l^*$ and $c_-^l=i(\ee_l-\ee_l^*)$ of $\Cliffc(V)$.

Further, it is easily seen that under the identification of vector spaces $\Lambda_\mC^*V\approx \Cliffc(V)$ the natural action of $\Cliffc(V)$ on itself by right multiplication $c_+^r$ corresponds to $\ee_r+\ee^*_r$. And there is yet a fourth representation $c_-^r$ of $\Cliffc(V)$ on the right defined by $i(\ee_r-\ee_r^*)$. 

The representations $c_+^r$ and $c_-^r$ again anticommute, and $c_+^r$ commutes with $c_+^l$ because they correspond to right and left multiplication on $\Cliffc(V)$. The definition of the element $y_n$ from above is thus actually based on the use of $c_+^l$ for the operator and $c_+^r$ for the right action. With this at hand, we may define another version of the Dirac and Bott elements. We denote by $f_-$ the function $V\to\mB(\Lambda_\mC^*V)$ given by $v\mapsto c^l_-(v)$, and by $\phi:\fn_0(V)\to\bo$ the action by multiplication.
\begin{PD} If we define the unbounded operators $D_1'$ and $D_2'$ by 
$$\Dom(D_1'):=\fn_c^\infty(V)\hat\otimes \Lambda_\mC^*V\text{ and }\Dom(D_2'):=\{\omega\in B|f_-\omega\in B\}$$ 
and set 
$$D_1':=i\check\gamma^k\partial_k\text{ and } D_2':=M_{f_-}$$
then we obtain Kasparov $(\fn_0(V),\Cliffc(V))$- and $(\Cliffc(V),\fn_0(V))$-modules $(\hsp,\phi,D_1')$ and $(B,c^l_+,D_2')$.
\end{PD}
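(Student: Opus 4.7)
The plan is to verify the (unbounded, graded) Kasparov-module axioms for each of $(\hsp,\phi,D_1')$ and $(B,c_+^l,D_2')$ along the same template as the lemma used for $x_n$ and $y_n$; the substantive work consists in tracking which of the four representations $c_\pm^{l/r}$ from Corollary \ref{cliffrep} plays which role.

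First I would fix the bimodule data. Under the vector-space identification $\Lambda_\mC^*V\cong\Cliffc(V)$, $\hsp=L^2(V)\hat\otimes\Lambda_\mC^*V$ becomes a graded Hilbert $\Cliffc(V)$-module via the right-multiplication representation $c_+^r$, which commutes with the multiplication action $\phi$ and with $c_+^l(e_k)$ (left and right multiplications in $\Cliffc(V)$ commute). Similarly, $B$ is a graded Hilbert $\fn_0(V)$-module via function multiplication, with $c_+^l$ as the left action. Regularity and self-adjointness are then routine: $D_1'$ is a constant-coefficient first-order symmetric differential operator on $\mR^n$, essentially self-adjoint on $\fn_c^\infty(V)\hat\otimes\Lambda_\mC^*V$ by Fourier transform exactly as for $D_1$, and regular since it commutes with the right $\Cliffc(V)$-action; $D_2'$ is pointwise multiplication by a self-adjoint Clifford-valued function, whose closure is regular and self-adjoint on $B$.

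For the bounded-commutator condition, for smooth $h$ with bounded derivatives one computes $[D_1',\phi(h)]=i\check\gamma^k\partial_k(h)$, which is bounded, and such $h$ are dense in $\fn_0(V)$. For $D_2'$ and a generator $v\in V\subset\Cliffc(V)$, Corollary \ref{cliffrep} gives the pointwise identity
\[ \big(c_-^l(x)c_+^l(v)+c_+^l(v)c_-^l(x)\big)\omega(x)=0, \]
so the graded commutator $[D_2',c_+^l(v)]_{\mathrm{gr}}$ vanishes; since this is a graded derivation and $V$ generates $\Cliffc(V)$ as an algebra, the graded commutator vanishes on all of $\Cliffc(V)$. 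The compact-resolvent condition reduces to the two squares $(D_1')^2=-\Delta$ (from $\check\gamma^k\check\gamma^l+\check\gamma^l\check\gamma^k=2\delta^{kl}$) and $(D_2')^2=|x|^2$ (since $c_-^l(v)^2=\langle v|v\rangle$, which drops out of Propositions \ref{carrelations} and \ref{gammarel}). Thus $\phi(h)(1+(D_1')^2)^{-1/2}=M_h(1-\Delta)^{-1/2}$ is compact on $L^2(V)$ for $h\in\fn_c(V)$, and tensoring with $\id_{\Cliffc(V)}$ (finite-dimensional) yields a compact endomorphism of $\hsp$ as a Hilbert $\Cliffc(V)$-module; continuity extends this to $\fn_0(V)$. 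For $D_2'$, $c_+^l(a)(1+(D_2')^2)^{-1/2}=c_+^l(a)\cdot M_{(1+|x|^2)^{-1/2}}$ is the product of an endomorphism of the finite-dimensional fibre with multiplication by a function vanishing at infinity, hence a compact endomorphism of $B$.

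The main obstacle is purely bookkeeping: ensuring the $\mZ/2$-gradings on $\Lambda_\mC^*V$, $\Cliffc(V)$ and $B$ are used coherently, checking that $D_1'$ and $D_2'$ are genuinely odd for the specified gradings, and being careful that the right actions commute with the chosen operators on the nose (not merely up to compacts). Once this is sorted, the analytic content is identical to that of the Kasparov-module verification preceding the definition of $x_n$ and $y_n$, with the symbols $\hat\gamma,\check\gamma$ (respectively $c_+, c_-$) playing interchanged roles.
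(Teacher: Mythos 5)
Your verification is correct and is precisely the argument the paper intends: the Proposition/Definition is stated without proof, the implicit justification being that the checks are identical to those in the earlier Lemma for $(\hsp,c_+,D_1)$ once the roles of the four representations $c_\pm^{l/r}$ are redistributed, and your computations ($(D_1')^2=-\Delta$, $(D_2')^2=|x|^2$, the vanishing graded commutator from the anticommutation of $c_+^l$ and $c_-^l$, and the compatibility with the right actions) fill in exactly those details. Nothing to add.
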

\begin{Rem} One easily checks that, for example $c_+^l(v)=(\ee_r-\ee_r^*)(v)\circ\gamma$, where $\gamma$ denotes again the grading operator; hence all these actions are all distinct. It is also straightforward to see that the element $D_1$ from the first definition, for example, corresponds to the left handed Dirac operator for the scalar product $-\Scp$.
\end{Rem}

\subsection{The Thom isomorphism in $KK$}
\ifthenelse{\boolean{notes}}{Bundles of finitely generated $A$-modules, where $A$ is a $C^*$-algebra, and pseudodifferential operators on sections of such bundles have been introduced by Mishchenko and Fomenko in \cite{MR548506}. We will have to deal with a similar type of bundles for the proof of the Thom-isomorphism. The difference is that we have to produce modules in the smooth category and our fibres are of more general type.}{}
\ifthenelse{\boolean{notes}}{\marginpar{use canonic product on finitely generated $A$-modules to get inner product?!}}{}

In this chapter, we discuss how principal $G$-bundles induce a functor from the category  $KK_G$ to $KK$ (more precisely, $\mathcal{R}KK$).

In the whole chapter, we assume that all our fibre bundles $p:E\twoheadrightarrow X$ are locally trivial; the projection is then open\ifthenelse{\boolean{notes}}{: If $e\in E$, $V\subseteq E$ a neighbourhood of $e$ and $\pi(e):=x$, then we may choose an open set $x\in U\subseteq X$ over which $E$ is trivialized by a bundle morphism $\rho:E|_U\to U\times F$, and $\pi_1(\rho(E|_U\cap V))$ is a neighbourhood of $x$.}{} (for principal bundles $p$ is always open.) Also, if the structure group and base are (locally) compact, the corresponding principal bundle is (locally) compact. In fact, it is not hard to show the stronger statement that the projection is closed when the fibre is compact.
\ifthenelse{\boolean{notes}}{\begin{Lem}
Let $p:E\twoheadrightarrow X$ be a locally trivial fibre bundle. If  the fibre is compact, then the projection is closed.
\end{Lem}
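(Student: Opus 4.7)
The plan is to show the complement of $p(C)$ is open for every closed $C\subseteq E$, by reducing locally to the standard fact that the projection $V\times F\to V$ is closed whenever $F$ is compact (the tube lemma).

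More precisely, fix a closed subset $C\subseteq E$ and a point $x\in X\setminus p(C)$. I would first pick, by local triviality, an open neighbourhood $V$ of $x$ together with a homeomorphism $\rho:p^{-1}(V)\to V\times F$ over $V$. The set $C':=\rho(p^{-1}(V)\cap C)$ is then a closed subset of $V\times F$, and since $x\notin p(C)$ the fibre $\{x\}\times F$ is disjoint from $C'$.

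Next I would invoke the tube lemma: because $F$ is compact, the projection $\pi_1:V\times F\to V$ is a closed map, so $\pi_1(C')\subseteq V$ is closed. Since $x$ is not in $\pi_1(C')$, there is an open neighbourhood $U\subseteq V$ of $x$ with $U\cap \pi_1(C')=\emptyset$, which translates back via $\rho$ to $p^{-1}(U)\cap C=\emptyset$, hence $U\cap p(C)=\emptyset$. This exhibits a neighbourhood of $x$ disjoint from $p(C)$, so $X\setminus p(C)$ is open and $p$ is closed.

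There is no real obstacle here beyond bookkeeping; the only point requiring a little care is the reduction from the bundle to a trivialising chart, since $a priori$ $C$ is only closed in $E$, not in $p^{-1}(V)$. This is handled automatically by the fact that $p^{-1}(V)$ is open and hence $p^{-1}(V)\cap C$ is closed in $p^{-1}(V)$, making $\rho(p^{-1}(V)\cap C)$ closed in $V\times F$ as needed for the tube lemma.
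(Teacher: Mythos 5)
Your argument is correct and follows essentially the same route as the paper's: localize to a trivializing chart around a point $x\notin p(C)$, observe that the image of $C$ there is closed in $V\times F$, and use closedness of the projection $V\times F\to V$ along the compact fibre to produce a neighbourhood of $x$ missing $p(C)$. The only difference is cosmetic -- you invoke the tube lemma as a named fact, whereas the paper unwinds it into the explicit finite-subcover argument.
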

\begin{proof}
Let $A\subseteq E$ be closed, set $A':=p(A)$ and choose $x\in X\setminus A'$. Let $x\in U\subseteq X$ be an open trivializing set with bundle chart $\phi:E|_U\to U\times F$. If $A'\cap U=\emptyset$, we are done. Otherwise $\bar A:=\phi(A\cap E|_U)$ is closed in $U\times F$. For every point $(x,f)$ over $x$ choose an open set $V_f\times W_f$  in the complement of $\bar A$ containing $(x,f)$, and choose a finite subset $f_i$ such that $W_{f_i}$ covers $F$. Set $V:=\cap V_{f_i}$; then $V\times F$ is open in $U\times F$ and as $\pi_1$ is open, $V$ is a neighbourhood of $x$ in $A'\setminus U$, and as $U$ is open, also in $X$. Hence $A'$ is closed.
\end{proof}
\begin{Prop}[Twisted Tychonoff Theorem] Let $p:E\twoheadrightarrow X$ be a locally trivial fibre bundle. Then if the fibre and base are compact, the total space $E$ is compact. 
\end{Prop}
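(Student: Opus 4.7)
The plan is to deduce this from the preceding lemma, which gives that the projection $p:E\twoheadrightarrow X$ is closed whenever the fibre is compact. Given that the fibres of $p$ are homeomorphic to the compact fibre $F$ and that $p$ is a closed, continuous surjection onto the compact space $X$, the statement reduces to the general topological fact that a closed continuous surjection with compact fibres and compact codomain has compact domain.

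To carry this out, I would start from an arbitrary open cover $\{U_\alpha\}_{\alpha\in A}$ of $E$ and produce a finite subcover. For each $x\in X$, the fibre $p^{-1}(x)$ is homeomorphic to $F$, hence compact, so finitely many of the $U_\alpha$, say indexed by a finite subset $A_x\subseteq A$, already cover $p^{-1}(x)$. Set $V_x:=\bigcup_{\alpha\in A_x} U_\alpha$, an open subset of $E$ containing $p^{-1}(x)$. Since $p$ is closed by the previous lemma, $p(E\setminus V_x)$ is closed in $X$, and by construction it does not contain $x$. Therefore
\[W_x\;:=\;X\setminus p(E\setminus V_x)\]
is an open neighbourhood of $x$ in $X$ satisfying $p^{-1}(W_x)\subseteq V_x$.

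The family $\{W_x\}_{x\in X}$ is an open cover of $X$, which is compact by hypothesis, so it admits a finite subcover $W_{x_1},\dots,W_{x_m}$. Then $E=\bigcup_{j=1}^m p^{-1}(W_{x_j})\subseteq\bigcup_{j=1}^m V_{x_j}$, and since each $V_{x_j}$ is itself a finite union of the $U_\alpha$, the finite collection $\{U_\alpha:\alpha\in A_{x_1}\cup\cdots\cup A_{x_m}\}$ is a finite subcover of $E$. This establishes compactness.

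No serious obstacle is expected: the only nontrivial input is the closedness of $p$, which was established in the previous lemma using local triviality and compactness of the fibre, and the rest is a standard tube-lemma style argument. An alternative, equally short route would be to choose a finite trivialising open cover $U_1,\dots,U_n$ of $X$, shrink it to a closed refinement $K_1,\dots,K_n$ with $K_i\subseteq U_i$ (using normality of the compact Hausdorff base), and observe that each $p^{-1}(K_i)\cong K_i\times F$ is compact by ordinary Tychonoff, so $E=\bigcup p^{-1}(K_i)$ is a finite union of compacta; but the argument above has the advantage of using the closed-projection lemma just proved.
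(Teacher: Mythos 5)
Your argument is correct and follows essentially the same route as the paper: the paper likewise invokes the closedness of $p$ from the preceding lemma and then cites the standard fact that a closed continuous map with compact fibres is proper, whence $E=p^{-1}(X)$ is compact. You have simply written out the proof of that standard fact (the tube-lemma argument) instead of quoting it.
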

\begin{proof} By the above, the projection $p$ is a closed map. Then we use the well known fact that a  continuous closed map of topological spaces such that the preimage of every point is compact is proper. Hence $p^{-1}(X)=E$ is compact.\end{proof}}{}

We assume in this chapter that all spaces are locally compact and  all groups are compact.

\com{Should get correspondence $\Gamma(X,E)=G(P,E)$ ($G\leftrightarrow \Gamma$, $X\leftrightarrow P$), all regularity conditions should stay fixed (ex:$\Gamma_c^n\leftrightarrow G_c^n$)}{}

For any locally compact group $G$ and right $G$-spaces $F$, $F'$ we denote by $\fn_G(F,F')$ the continuous equivariant maps from $F$ to $F'$ that vanish at infinity. For a bundle $E\twoheadrightarrow X$ over a topological space  we denote  by $\Gamma(E)$ the continuous sections vanishing at infinity.

When speaking of principal bundles, we will always assume that the action of the appropriate group on the principal bundle is on the right. As usual, a topological space with continuous action of a topological group $G$ will be called a $G$-space.
\subsection{Fibrations}\label{fibrations}
Let $X$ be a topological space, $P\twoheadrightarrow X$ a principal $G$-bundle for a group $G$ and $Y$ a left $G$-space. Then 
$$\Gamma(P\times_G Y)\cong \fn_G(P,Y)$$
where $Y$ is viewed as a right $G$-space. In particular
\begin{equation*}\Gamma(P\times_G \fn (Y))\cong \fn (P\times_G Y)
\end{equation*}
where $G$ acts on $\fn (Y)$ by $(g.f)(y):=f(g^{-1}y)$ for all $g\in G$, $y\in Y$ and $f\in \fn (Y)$. In particular, $\Gamma (P\times_G \fn (\mR^n))\cong \fn(E)$, where $E\twoheadrightarrow X$ is a vector bundle associated to the principal $G$-bundle $P\twoheadrightarrow X$ with fibre a $G$-space $\mR^n$.

In general, given a topological group $G$, a topological left $G$-space $F$ and topological principal $G$-bundle $P\twoheadrightarrow X$ over a topological space $X$, we set $F_P:=\Gamma(P\times_G F)$. Then $\Gamma(P\times_G\cdot)=\cdot_P$ is actually a functor from the category of left $G$-spaces and $G$-maps to the category of "$\fn(X)$"-objects  in the target category; if $\phi:A\to B$ is a $*$-homomorphism, we denote by $\phi_P$ the map $A_P\to B_P$.

A $B$-Hilbert module $E$ is called a $G$-Hilbert module, if $B$ is a $G$-algebra and the group $G$ acts on $E$ continuously through $B$-linear operators such that for all $g\in G$, $\xi,\eta\in E$, $b\in B$:
$$g(\xi b)=(g\xi)(gb)\; \text{ and }\;\langle g\xi|g\eta\rangle=g\langle \xi|\eta\rangle.$$
$E_P$ is then a Hilbert $B_P$-module by setting for all $\tilde \xi,\tilde\eta\in \fn_G(P,E)\approx \Gamma(P\times_G E)$
$$\langle\tilde\xi|\tilde\eta\rangle(p):=\langle\tilde\xi(p)|\tilde\eta(p)\rangle,$$
which is clearly an element of $\fn_G(P,B)$. 

\begin{Def} A $\fn(X)$-algebra, where $X$ is a locally compact space, is a $C^*$-algebra $A$ with a homomorphism $\fn(X)\to Z(\mc{M}(A))$ into the center of the multiplier algebra of $A$ that is nongenerate, i.e. $\fn(X) A=A$.
\end{Def}
Note that $\fn(X)A=A$ is equivalent to $\fn(X)A$ is dense in $A$, by Proposition 1.8 in \cite{MR1395009}. 
Every Hilbert $A$-module $E$ over a $\fn(X)$-algebra inherits a $\fn(X)$-action through bounded operators because  $E$ can be viewed as an $\mathcal{M}(A)$-module; every $T\in\mathbb{B}(E)$ is then automatically $C(X)$-linear.
\begin{Prop}\label{fibrewiseops}
Let $E$ be a $G$-Hilbert module over a $G$-$C^*$-algebra $B$, $P\twoheadrightarrow X$ a principal $G$-bundle. Then 
$$\mathbb{B}_{B_P}(E_P)\cong \Gamma_b(P\times_G(\mathbb{B}_B(E),str))$$
where $\mathbb{B}_B(E)$ carries the strict operator topology and the sections are bounded in norm. Further
$$\mathbb{K}_{B_P}(E_P)\cong \Gamma_b(P\times_G( \mathbb{K}_B(E),||\cdot||_{op})).$$
The spectrum of an operator is the closure of the union of the spectra in the fibres.
\end{Prop}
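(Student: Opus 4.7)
The plan is to construct the isomorphism fibrewise and reduce the work to a locality argument over $\fn(X)$. For the easy direction, given an equivariant strictly continuous bounded map $\tilde T \colon P \to \mathbb{B}_B(E)$ representing a section of $P \times_G \mathbb{B}_B(E)$, define $\Phi(\tilde T) \in \mathbb{B}_{B_P}(E_P)$ by $(\Phi(\tilde T) \xi)(p) := \tilde T(p)\,\xi(p)$ for $\xi \in E_P = \fn_G(P, E)$. Equivariance of $\tilde T$ and $\xi$ makes the product equivariant; strict continuity of $\tilde T$, continuity of $\xi$, and boundedness of $\tilde T$ give continuity vanishing at infinity; $B_P$-linearity is immediate; and adjointability with adjoint $\Phi(\tilde T^*)$, where $\tilde T^*(p) := \tilde T(p)^*$, follows once one notes that the involution is continuous in the strict topology. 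Injectivity is clear: for $p_0 \in P$ and $v \in E$, a partition of unity on $X$ over a trivializing neighborhood of $p_0$ produces $\xi \in E_P$ with $\xi(p_0) = v$.

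The substantive step is surjectivity. Given $S \in \mathbb{B}_{B_P}(E_P)$, define $\tilde T(p)v := (S\xi)(p)$ for any $\xi \in E_P$ with $\xi(p) = v$. Well-definedness rests on the key lemma: $\xi(p) = 0$ implies $(S\xi)(p) = 0$. For this, note that $B_P$ is a $\fn(X)$-algebra, so $S$ is automatically $\fn(X)$-linear; the function $x \mapsto \|\xi(q)\|$ (for any $q$ with $p(q) = x$) descends to a continuous function on $X$ vanishing at $x_0 := p(p)$, so given $\varepsilon > 0$ one chooses $f \in \fn(X)$ with $f(x_0) = 0$ and $\|(1-f)\xi\|_{E_P} < \varepsilon$; then $(S\xi)(p) = f(x_0)\,(S\xi)(p) + (S((1-f)\xi))(p)$ has norm at most $\|S\|\,\varepsilon$, forcing $(S\xi)(p) = 0$. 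From this, $B$-linearity of $\tilde T(p)$, the bound $\|\tilde T(p)\| \le \|S\|$, equivariance $\tilde T(pg) = g^{-1}\tilde T(p)g$, and adjointability (with adjoint obtained from $S^*$) follow by direct verification. Strict continuity of $\tilde T$ at $p$ is obtained fibrewise: for fixed $v$, choose $\xi_v \in E_P$ with $\xi_v(p) = v$; then $q \mapsto \tilde T(q)\xi_v(q) = (S\xi_v)(q)$ is continuous and $\xi_v(q) \to v$ as $q \to p$, so the uniform norm bound gives $\tilde T(q) v \to \tilde T(p) v$, and the analogous argument for $S^*$ yields $*$-strong and hence strict continuity.

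The compact statement follows from the observation that rank-one operators $\theta_{\xi,\eta} \in \mathbb{K}_{B_P}(E_P)$ correspond under $\Phi^{-1}$ to the norm-continuous sections $p \mapsto \theta_{\xi(p),\eta(p)}$, together with the fact that such sections span a norm-dense subspace of $\Gamma_b(P \times_G (\mathbb{K}_B(E), \|\cdot\|))$ by fibrewise density of finite-rank operators and a partition of unity on $X$; $\Phi$ is isometric on such elements and extends to an isomorphism on norm closures. The spectrum description then reduces to the standard characterisation of invertibility in a $\fn(X)$-algebra, namely pointwise invertibility together with a uniform bound on the fibrewise inverses. The main obstacle is the locality lemma $\xi(p) = 0 \Rightarrow (S\xi)(p) = 0$, on which all of surjectivity rests; once this is in hand the remaining verifications are routine equivariance bookkeeping.
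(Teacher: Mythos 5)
Your proof is correct, and it is considerably more explicit than the paper's, which disposes of Proposition \ref{fibrewiseops} with the single sentence that it ``follows easily by using the description of sections as equivariant maps and local trivialisations'' --- i.e.\ it implicitly reduces to the trivial-bundle identification $\mathbb{B}_{\fn_0(U)\otimes B}(\fn_0(U)\otimes E)\cong C_b^{str}(U,\mathbb{B}_B(E))$ and glues. You argue globally instead: the crux is your locality lemma that $\xi(p)=0$ forces $(S\xi)(p)=0$, which you derive from the automatic $\fn(X)$-linearity of adjointable operators over a $\fn(X)$-algebra (a fact the paper records immediately before the proposition), and which makes the fibrewise definition $\tilde T(p)v:=(S\xi)(p)$ well defined without ever trivializing; local trivializations enter only to see that evaluation $E_P\to E$ is surjective. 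Both routes prove the same statement, but yours isolates the one non-formal step and applies verbatim to any $\fn(X)$-Hilbert module whose fibre evaluations are onto, whereas the paper's gluing route buries the same locality statement inside the trivial case. Two small points worth recording: for non-compact $X$ your density argument actually identifies $\mathbb{K}_{B_P}(E_P)$ with the norm-continuous sections \emph{vanishing at infinity} rather than all bounded ones (already for $E=B=\mC$ and $P=X$ one gets $\fn_0(X)$, not $C_b(X)$), which is the correct sharpening of the displayed formula; and in the spectral claim the uniform bound on the fibrewise resolvents that you invoke is genuinely indispensable for non-normal elements --- pointwise invertibility alone does not give invertibility of the section --- so it deserves to be stated explicitly rather than folded into ``the standard characterisation''.
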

\begin{proof} This follows easily by using the description of sections as equivariant maps and local trivialisations.
\end{proof}

We suppose for the rest that all our spaces are second countable.

\begin{PD} Let $P\twoheadrightarrow X$ be a principal $G$-bundle over $X$. For every Kasparov module $x=(E,\phi,F)$ we define a new Kasparov module (using Proposition \ref{fibrewiseops})\index{$x_P$}
$$x_P:=(E_P,\phi_P,F_P).$$ 
\end{PD}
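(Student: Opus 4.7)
The plan is to verify the Kasparov module axioms for the triple $(E_P,\phi_P,F_P)$ by exploiting Proposition \ref{fibrewiseops} to reduce everything to fibrewise statements. Under the identifications of that proposition, $\mathbb{B}_{B_P}(E_P)$ consists of bounded, strictly-continuous, $G$-equivariant sections of $P\times_G\mathbb{B}_B(E)$, while $\mathbb{K}_{B_P}(E_P)$ consists of bounded, operator-norm-continuous, equivariant sections of $P\times_G\mathbb{K}_B(E)$. The task splits in two: first, make sense of $F_P$ as an element of the former; second, show that for every $\tilde a\in A_P$ each of $\phi_P(\tilde a)(F_P^2-1)$, $\phi_P(\tilde a)(F_P-F_P^*)$ and $[\phi_P(\tilde a),F_P]$ lies in the latter.

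First I would pin down $F_P$ itself. The operator $F$ coming from a $G$-Kasparov module is in general only $G$-equivariant modulo compacts, so one cannot immediately form the constant section $p\mapsto F$. However, $G$ is compact, and Haar-averaging modifies $F$ by a compact operator to produce a strictly $G$-invariant representative in the same Kasparov class. That representative determines the constant equivariant section of $P\times_G(\mathbb{B}_B(E),\text{str})$, yielding the required $F_P\in\mathbb{B}_{B_P}(E_P)$. The $*$-homomorphism $\phi_P:A_P\to\mathbb{B}_{B_P}(E_P)$ is defined functorially by $\tilde a\mapsto(p\mapsto\phi(\tilde a(p)))$; it is manifestly $*$-preserving and multiplicative since $\phi$ is and since the algebra operations on sections are computed fibrewise.

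Next I would verify the three compactness conditions. Fibrewise at $p\in P$, the section $\phi_P(\tilde a)(F_P^2-1)$ is $\phi(\tilde a(p))(F^2-1)$, which lies in $\mathbb{K}_B(E)$ by the Kasparov axioms for $x$; analogous statements hold for the self-adjointness and commutator conditions. To conclude via Proposition \ref{fibrewiseops} that the resulting sections actually live in $\mathbb{K}_{B_P}(E_P)$, I would note that the three maps $a\mapsto\phi(a)(F^2-1)$, $a\mapsto\phi(a)(F-F^*)$ and $a\mapsto[\phi(a),F]$ are each norm-continuous from $A$ into $\mathbb{K}_B(E)$, so composition with the norm-continuous map $\tilde a:P\to A$ produces operator-norm-continuous sections of $P\times_G\mathbb{K}_B(E)$. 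Equivariance of these sections is immediate from equivariance of $\tilde a$, of $\phi$, and of the chosen invariant representative of $F$.

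The one non-routine point is thus the well-definedness of $F_P$: the associated-bundle construction demands a strictly $G$-equivariant operator, whereas the hypothesis only supplies invariance modulo compacts. Compactness of $G$ is what makes the situation benign, via Haar averaging; once that step is in place, everything else is a direct translation of the original Kasparov conditions on $x$ through the fibrewise identifications of Proposition \ref{fibrewiseops}.
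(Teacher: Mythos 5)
Your proposal is correct and follows the same route the paper intends (the paper's proof is literally ``Obvious by Proposition \ref{fibrewiseops}''); you have simply supplied the fibrewise verifications, and you rightly isolate the one genuinely non-routine point, namely that the associated-bundle construction needs a strictly $G$-invariant $F$, obtained by Haar averaging since $G$ is assumed compact in this chapter. One small imprecision: for a $G$-Kasparov module the difference $g(F)-F$ is in general only compact after multiplication by $\phi(a)$, so the averaged operator $\tilde F$ is a compact perturbation of $F$ \emph{relative to} $\phi(A)$ rather than an honest compact perturbation --- but this still yields the same Kasparov class, so your conclusion stands.
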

\begin{proof} 
Obvious by \ref{fibrewiseops}.
\end{proof}

The definition of $\mathcal{R}KK_X(A,B):=\mathcal{R}KK(X;A,B)$ is given in \cite{KasparovNovikov}. 
\begin{Prop}\label{fibprod} $\cdot_P$ yields a functor from $KK_G$ to $KK$, more precisely with values in $\mathcal{R}KK_X$. In particular: $x_P\cap y_P=(x\cap y)_P$.
\end{Prop}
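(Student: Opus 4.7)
The plan is to establish two things: that $\cdot_P$ is well-defined on Kasparov classes (and lands in $\mathcal{R}KK_X$), and that it is multiplicative with respect to the Kasparov product. Functoriality on morphisms is automatic from the definition of the construction on modules.

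For well-definedness on $KK_G$, I would check each of the defining equivalence relations in turn. Unitary equivalence of $G$-Kasparov modules transports fiberwise to unitary equivalence of the $B_P$-Kasparov modules. A degenerate $G$-Kasparov module -- one for which $[F, \phi(a)]$, $\phi(a)(F^2 - 1)$ and $\phi(a)(F - F^*)$ all vanish identically -- yields an $\mathcal{R}KK_X$-module for which the same operators vanish, because on $E_P$ these operators are simply the associated sections of the operator bundle. Homotopies are handled by noting that $\cdot_P$ is compatible with tensoring by $C([0,1])$, since $G$ acts trivially on the interval, so $(E \hat\otimes C([0,1]))_P \cong E_P \hat\otimes C([0,1])$. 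The $\mathcal{R}KK_X$-structure is present because $A_P$ and $B_P$ are naturally $\mathcal{C}(X)$-algebras (by multiplication of sections pulled back from $X$), and $\phi_P$ and $F_P$ are $\mathcal{C}(X)$-linear.

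For multiplicativity, pick $x = [(E_1, \phi_1, F_1)] \in KK_G(A,B)$ and $y = [(E_2, \phi_2, F_2)] \in KK_G(B,C)$, and represent their product by $(E_1 \hat\otimes_B E_2, \phi_1 \hat\otimes 1, F)$, where $F$ satisfies the Connes-Skandalis conditions of \cite{MR775126}: $F$ is an $F_2$-connection for $F_1$, and $\phi(a)[F_1 \hat\otimes 1, F]\phi(a)^* \geq 0$ modulo compacts for all $a \in A$. There is a natural isomorphism
$$(E_1 \hat\otimes_B E_2)_P \,\cong\, (E_1)_P \hat\otimes_{B_P} (E_2)_P$$
extending $\tilde\xi_1 \otimes \tilde\xi_2 \mapsto (p \mapsto \tilde\xi_1(p) \hat\otimes \tilde\xi_2(p))$ on equivariant sections, and under it $(\phi_1 \hat\otimes 1)_P$ corresponds to $(\phi_1)_P \hat\otimes 1$. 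It therefore remains to show that $F_P$ represents the Kasparov product of $(F_1)_P$ and $(F_2)_P$, i.e.\ satisfies the Connes-Skandalis conditions on the fibered module.

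The main obstacle is checking the two conditions (connection property and positivity modulo compacts) after applying $\cdot_P$. Both are stated in terms of compactness of certain operators, and here Proposition \ref{fibrewiseops} is decisive: compact operators on $E_P$ correspond to norm-continuous sections of the bundle $P \times_G \mathbb{K}_B(E)$. Consequently both conditions reduce to their fiberwise counterparts, which hold at each $p \in P$ because $F$ satisfies them as a $G$-equivariant Kasparov product; $G$-equivariance guarantees the fiberwise data assemble into genuine equivariant sections, and norm continuity follows because the connection and positivity conditions for $F$ hold uniformly for bounded $a$. This yields $x_P \cap y_P = (x \cap y)_P$ and completes the proof.
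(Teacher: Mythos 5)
Your proof is correct and takes essentially the same route as the paper's (much terser) argument: choose a $G$-equivariant representative of the Kasparov product and use Proposition \ref{fibrewiseops} to reduce the connection and positivity conditions for the fibred operator to their fibrewise counterparts, realized as norm-continuous equivariant sections of the compact-operator bundle. The additional checks you carry out (degenerate modules, homotopies, the $\mathcal{C}(X)$-structure) are exactly what the paper dismisses as the trivial part.
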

\begin{proof} The only nontrivial part remaining is compatibility with the product. Using an equivariant product for two given modules, it is easily seen that the fibration of the product is a product of the fibrations of the modules using Proposition \ref{fibrewiseops}. 
\end{proof}

We give a proof of a version of the Thom isomorphism that was stated without proof in \cite{KasparovNovikov}. It says that the two  algebras one can naturally	 associate to a bundle -- the functions on the total space and sections of the cliffordified bundle -- are naturally the same in $KK$-theory.
\begin{Th} If $E\twoheadrightarrow X$ is a vector bundle, then the associated algebras $\Gamma(\mC l(E))$ and $\fn(E)$ are isomorphic in $KK$, i.e., admit a $KK$-equivalence.

If $E$ is a $spin^c$-bundle, then $\Gamma(E)$ and $\fn(X)$ are $KK$-equivalent (through an element of $KK^1$in odd dimension).
\end{Th}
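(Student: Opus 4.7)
The plan is to upgrade the Bott and Dirac elements $x_n$ and $y_n$ of Theorem \ref{Bottperiodicity} to $O(n)$-equivariant classes, and then transport them along the orthonormal frame bundle of $E$ using the fibration functor $\cdot_P$ of Proposition \ref{fibprod}. In this way the Thom equivalence is reduced, fibre by fibre, to the Bott periodicity already established.

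First, I observe that all of the data entering the definition of $x_n$ and $y_n$ are constructed in an $O(n)$-invariant fashion: the Hilbert space $\hsp = L^2(\mR^n) \hat\otimes \Lambda^*_{\mC} \mR^n$ carries a natural unitary $O(n)$-action, the representation $c_+$ and the multiplication operator $M_f$ are equivariant, and the de Rham operator $d+d^*$ commutes with the $O(n)$-action since it is defined invariantly on forms. Hence $x_n$ and $y_n$ lift to classes in $KK^{O(n)}(\fn_0(\mR^n) \hat\otimes \mC_n, \mC)$ and $KK^{O(n)}(\mC, \fn_0(\mR^n) \hat\otimes \mC_n)$. Using that $\Lambda^*_{\mC} \mR^n$ is an $O(n)$-equivariant Morita equivalence between $\mC_n \hat\otimes \mC_n$ and $\mC$ (realised by commuting left and right Clifford actions), these classes can be re-expressed as mutually inverse equivariant Thom and dual-Thom elements
\begin{equation*}
\tilde\tau_n \in KK^{O(n)}(\fn_0(\mR^n), \mC_n), \qquad \tilde\lambda_n \in KK^{O(n)}(\mC_n, \fn_0(\mR^n)).
\end{equation*}
One must check that the argument of Theorem \ref{Bottperiodicity}, in particular the Fredholm index computation and Atiyah's rotation trick, survives in the equivariant category; the rotation trick uses only connectedness of $SO(2)$ and is unaffected by the external $O(n)$-symmetry.

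Now let $P \twoheadrightarrow X$ denote the orthonormal frame bundle of $E$, a principal $O(n)$-bundle. By the identifications of Section \ref{fibrations}, $(\fn_0(\mR^n))_P = \fn_0(E)$ and $(\mC_n)_P = \Gamma(\mC l(E))$. Applying the functor $\cdot_P$ of Proposition \ref{fibprod} to $\tilde\tau_n$ and $\tilde\lambda_n$ produces classes in $KK(\fn_0(E), \Gamma(\mC l(E)))$ and $KK(\Gamma(\mC l(E)), \fn_0(E))$; functoriality and multiplicativity of $\cdot_P$ (Proposition \ref{fibprod}) show that they remain mutually inverse, yielding the desired $KK$-equivalence. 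This proves the first statement.

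For the $spin^c$ statement, assume $E$ carries a $spin^c$-structure, so that its frame bundle lifts to a principal $Spin^c(n)$-bundle $Q$. In even dimension the complex spinor representation $S$ furnishes a $Spin^c(n)$-equivariant Morita equivalence between $\mC_n$ and $\mC$; in odd dimension the analogous statement holds between $\mC_n$ and $\mC_1$, the latter being $KK^1$-equivalent to $\mC$ by Theorem \ref{Bottperiodicity}. Applying $\cdot_Q$ produces a $KK$-equivalence (resp.\ $KK^1$-equivalence) between $\Gamma(\mC l(E))$ and $\fn(X)$ implemented by the associated spinor bundle; composing with the first part yields the announced equivalence between $\fn_0(E)$ and $\fn(X)$. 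The main obstacle I foresee is the equivariant refinement of Bott periodicity together with the $O(n)$-equivariance of the Morita equivalence $\mC_n \hat\otimes \mC_n \sim \mC$; once these are in place, the remainder is functorial bookkeeping via Proposition \ref{fibprod}.
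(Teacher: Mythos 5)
Your proposal is correct and takes essentially the same route as the paper: an $O(n)$-equivariant refinement of the Bott/Dirac elements $x_n$, $y_n$, transported along the orthonormal frame bundle by the functor $\cdot_P$ of Proposition \ref{fibprod}, with the Clifford factor removed via the canonical complex (hence $spin^c$) structure on $\mR^n\oplus\mR^n$. The only cosmetic difference is that you apply the $O(n)$-equivariant Morita equivalence $\mC_n\hat\otimes\mC_n\sim\mC$ upstairs, before fibrating, to produce classes in $KK^{O(n)}(\fn_0(\mR^n),\mC_n)$, whereas the paper carries the extra $\mC_n$ through the fibration and only then invokes Plymen's theorem to trivialise $\Gamma(\mC l(E\oplus E))$ -- these are the same fact in two guises.
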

\begin{proof}
We choose a hermitian metric on $E$ and write $E$ as the associated bundle to an $O(n)$-principal bundle, $E=P\times_{O(n)}\mR^n$. The complex Clifford bundle does not depend on the choice of hermitian metric and decomposes as $\mC l(E)=P\times_{O(n)}\mC_n$. 

Let $[x_n]\in KK_{O(n)}(\fn(\mR^n)\hat\otimes \mC_n,\mC)$ be the class of the Bott element and $[y_n]$ its inverse. Then equipping a second copy of $\mC_n$ with the same $O(n)$-action, we can define a $(\fn(\mR^n)\hat\otimes\mC_{2n},\mC_n)$-module $x:=\tau^r_{\mC_n}(x_n)$. 
This yields modules 
$$x_P:=(\tau^r_{\mC_n}(x_n))_P\text{ and }y_P:=(\tau^r_{\mC_n}(y_n))_P$$
that are inverse to each other by \ref{fibprod}, and hence 
$$\Gamma(\mC l(E))\sim\Gamma((P\times_G \fn(\mR^n))\hat\otimes_{\fn(X)}\Gamma(\mC l(E\oplus E))$$ 
are $KK$-equivalent. The bundle  $\mC l(E\oplus E)$ has a canonical complex structure, is thus $spin^c$, and hence Morita equivalent to $\fn(X)$ (by \cite{MR860349}, 2.11 Theorem); Morita invariance of $KK$ now proves the theorem.
\end{proof}

\bibliographystyle{alpha}
\bibliography{../../Fullbib}
\end{document}